\newtheorem{thm}{Theorem}
\newtheorem{lem}{Lemma}
\newtheorem{defn}{Definition}
\newtheorem{rem}{Remark}
\newtheorem{prob}{Problem}
\newtheorem{conj}{Conjecture}
\newtheorem{ques}{Question}
\DeclareMathOperator{\conv}{conv}
\DeclareMathOperator{\dif}{d}
\DeclareMathOperator{\vol}{vol}
\DeclareMathOperator{\surf}{surf}
\DeclareMathOperator{\area}{area}
\DeclareMathOperator{\inter}{int}
\DeclareMathOperator{\aff}{aff}
\DeclareMathOperator{\relint}{relint}
\DeclareMathOperator{\bd}{bd}
\DeclareMathOperator{\perim}{perim}
\newcommand{\K}{\mathcal{K}}
\renewcommand{\Re}{\mathbb R}
\newcommand{\Sph}{\mathbb{S}}
\newcommand{\M}{\mathcal{M}}
\newcommand{\B}{\mathbf B}
\newcommand{\F}{\mathcal{F}}
\newcommand{\C}{\mathcal{C}}
\newcommand{\W}{\mathcal{W}}
\begin{document}
\title[On the average number of normals]{On the average number of normals through points of a convex body}
\author[G. Domokos and Z. L\'angi]{G\'abor Domokos and Zsolt L\'angi}
\address{G\'abor Domokos, Dept. of Mechanics, Materials and Structures, Budapest University of Technology,
M\H uegyetem rakpart 1-3., Budapest, Hungary, 1111}
\email{domokos@iit.bme.hu}
\address{Zsolt L\'angi, Dept.\ of Geometry, Budapest University of
Technology and Economics, Budapest, Egry J\'ozsef u. 1., Hungary, 1111}
\email{zlangi@math.bme.hu}

\keywords{inner normals, diameter, constant width bodies, convex bodies, static equilibrium, eikonal equation.}
\subjclass{52A40, 52A20, 28A75}
\thanks{The authors gratefully acknowledge the support of the J\'anos Bolyai Research Scholarship of the Hungarian Academy of Sciences and support from OTKA grant 104601}

\begin{abstract}
In 1944, Santal\'o asked about the average number of normals through a point of a given convex body. Since then, numerous results
appeared in the literature about this problem.
The aim of this paper is to give a concise summary of these results, with some new, recent developments.
We point out connections of this problem to static equilibria of rigid bodies as well as to geometric partial differential equations of surface evolution.
\end{abstract}

\maketitle

\section{Introduction}\label{sec:intro}

\subsection{Motivation: static equilibria}

The study of equilibrium points of a convex solid $K$, 
with respect to its centre of gravity $g$ has been a fundamental question of statics 
ever since the work of Archimedes \cite{Archimedes}.
The number $n(K,g)$ of equilibria (i.e. the number of surface normals passing through $g$) is characteristic of the shape, it has been applied to classify beach pebbles \cite{DSSV} and turtle shells \cite{DV}.

It is a natural question to ask how difficult it is to change $n(K,g)$;
we call this property \emph{robustness}, and in \cite{DL1} we introduced possible mathematical approaches to this concept.
The question is strongly motivated by the above-mentioned applications in natural sciences: material inhomogeneities can be interpreted as a variation of 
the location of $g$, while surface evolution (e.g. by abrasion) results in a variation of the hull $\bd K$.
The main difficulty of the mathematical problem lies in the nontrivial coupling via integrals between $g$ and $\bd K.$

One possible approach (which we followed in \cite{DL1}) is to solve easier, decoupled problems. We can generalize the concept $n(K,g)$ of static equilibria
(with respect to the centre of gravity $g$) to that of \emph{relative equilibria}  $N(K,p),$ with respect to a fixed point $p \in K$. Motivated by material inhomogeneities,
we may seek \emph{internal robustness}, defined as the radius $r$ of the maximal ball $\B_r$ around $g$ such that if $p\in \B_r$ then $n(K,p)=n(K,g)$.
From the mathematical point of view, this problem is closely related to the geometry of caustics \cite{PS96},
and has attracted recent interest in the context of inhomogeneous polyhedra (cf. \cite{Dawson}, \cite{Dawson2}, \cite{DawsonFinbow} and \cite{Heppes}).
The other decoupled problem, motivated by the discrete model of abrasion and fragmentation \cite{DSV}, \cite{Krapivsky}, leads to the concept of \emph{external robustness}, defined as the minimal (scaled) truncation of $K$,
resulting in the convex body $K'$ such that $n(K',g)<n(K,g)$.

In the current paper we follow another approach: by averaging $n(K,p)$ over all possible locations $p\in K$ we
obtain the quantity $n(K)$ to the study of which our paper is dedicated. Obviously, $n(K)$ will be independent of internal perturbations (material inhomogeneities) and
one would expect it to be less sensitive to external perturbations.
As we will discuss it in Section \ref{sec:remarks}, this expectation is partially justified:
$n(K)$ changes continuously if we consider local truncations of $K$ by hyperplanes, corresponding to the above-mentioned discrete models of abrasion processes \cite{DSV}.
It is remarkable that these mathematical questions have been studied in great detail in their own right, regardless of the aforementioned physical
motivation. In 1944, Santal\'o \cite{S44} asked about the extremal values of the average number of normals through a point of a convex body $K$; or in other words, the extremal values of $n(K)$.
Since then, numerous results appeared in the literature regarding this quantity and
one main aim of our paper is to review them and to present some new results. 
Among others, we improve several existing bounds for $n(K)$ (corresponding to various classes of convex bodies)
and we also sharpen the necessary conditions for centrally symmetric polytopes corresponding to maximal values of $n(K)$.

Beyond adding to existing results on extremal values of $n(K)$, 
our second goal is to point out a connection to geometric PDEs.
In 1987 Grayson \cite{Grayson} proved that in two dimensions, under the so-called \emph{curve shortening flow} (where points of $\bd K$ move
in the direction of the inward surface normal with speed proportional to the curvature) $N(K,p)$ is decreasing monotonically.
We will point out that, under some restrictions on $K$,  $n(K)$ is increasing monotonically under the \emph{uniform flow} (where points
of $\bd K$ move in the direction of the inward surface normal with uniform speed) and we will state a conjecture that similar
behaviour is expected if the normal speed is proportional to some negative power of the curvature. Since our result is not
restricted to two dimensions and $n(K)$, unlike $n(K,p)$, is a frame-invariant quantity, it opens a potentially interesting
connection to the theory of geometric PDEs. There has been substantial
interest in the monotonicity of frame-invariant quantities under geometric PDEs, for example the Huisken functional
under the \emph{mean curvature flow} \cite{Huisken}, the isoperimetric ratio (in two dimensions) under the curve shortening flow
\cite{Gage} and the curvature entropy under the same flow \cite{Chow}.
Beyond being mathematically challenging, geometric PDEs are also
interesting from the point of view of natural abrasion processes.  The shape and evolution of pebbles has been a matter of discussion
since at least the time of Aristotle \cite{Krynine1} and even in recent times it has received the attention of mathematicians \cite{Bloore} \cite{Firey}, \cite{Rayleigh1}.
However, caution is advised when using $n(K)$ as an indicator in these PDE models. As we discuss it in Section \ref{sec:remarks}, while $n(K)$ changes continuously
under $C^2$-smooth variation of $\bd K$, 
it is easy to show an example where $n(K)$ changes discontinuously under a $C^0$-smooth change of $\bd K$.

After introducing basic notions and notations in Subsection~\ref{ssec:notations},
we  review existing results on $n(K)$ in Section \ref{sec:history}. Our new results,
including those on evolution PDEs, are presented in Section \ref{sec:newresults}.
In Section \ref{sec:remarks} we state some open problems, conjectures and
make some additional comments.


\subsection{Definitions and basic notations}\label{ssec:notations}

In our investigation, we denote Euclidean $m$-space space by $\Re^m$, and its unit sphere, with the origin $o$ as its centre, by $\Sph^{m-1}$.
For $p,q \in \Re^m$, $[p,q]$ denotes the closed segment with endpoints $p$ and $q$.
We denote $m$-dimensional Lebesgue measure by $\vol$, or, in the case $m=2$, by $\area$.
A \emph{convex body} is a compact, convex set with nonempty interior. A convex body $K$ is of \emph{constant width $s$}, if the distance between any two parallel supporting hyperplanes of $K$ is $s$. The surface area of a convex body $K \subset \Re^m$ is denoted by $\surf(K)$, or in the case $m=2$, by $\perim(K)$.

Our main definition is the following.

\begin{defn}\label{defn:equilibrium}
Let $K \subset \Re^m$ be a convex body, and let $q \in \bd K$. A line, starting at $q$ and perpendicular to a supporting hyperplane of $K$ at $q$
is called a \emph{normal} of $K$ at $q$. For any $p \in K$, we denote the number of the normals of $K$, passing through $p$, by $n(K,p)$.
Furthermore, we set
\[
I(K) = \int_{p \in K} n(K,p) \dif p, \quad \mathrm{and} \quad n(K) = \frac{I(K)}{\vol(K)} .
\]
\end{defn}

Observe that the quantity $n(K)$ is the mean value of the number of normals through a randomly chosen point of $K$, using uniform distribution.

\section{History}\label{sec:history}

The first result regarding Santal\'o's question is due to Chakerian \cite{C84}, who examined a similar problem for plane convex bodies.

\begin{defn}\label{defn:diameter}
Let $K \subset \Re^m$ be a convex body. If $[p,q] \subset K$ with the property that for any $[r,s] \subset K$ parallel to $[p,q]$, $|q-p| \geq |s-r|$
is satisfied, then we say that $[p,q]$ is an \emph{affine diameter} (or shortly, \emph{diameter}) of $K$.
For any $p \in K$, we denote the number of affine diameters of $K$, containing $p$, by $d(K,p)$.
Furthermore, we set
\[
D(K) = \int_{p \in K} d(K,p) \dif p, \quad \textrm{and} \quad d(K)=\frac{D(K)}{\vol(K)}.
\]
\end{defn}

Chakerian observed that if $K$ is a convex body of constant width, the diameters of $K$ are exactly the normals of $K$, counted once belonging to each of its endpoints; or in other words, that for any $p \in K$, we have $d(K,p) = 2 n(K,p)$. In his paper he showed that
\[
\frac{1}{4} \area(K-K) \leq D(K) \leq \frac{1}{2} \area(K-K)
\]
for any plane convex body $K$ with $C^3$-differentiable boundary, and with nowhere vanishing curvature.
Combining this with the estimates on the volume of the difference body of a convex body (cf. e.g. \cite{BF34}), he obtained that
$\area(K) \leq D(K) \leq 3 \area(K)$.
Here equality on the left-hand side is attained only by centrally symmetric bodies, and the quantity on the right can be approached, for example, by slightly modified Reuleaux polygons.

By the Blaschke-Lebesgue theorem (cf. \cite{BF34}), for plane convex bodies of constant width it follows that
\begin{equation}\label{eq:BlaschkeLebesgue}
2 \leq n(K) \leq \frac{2\pi}{\pi-\sqrt{3}},
\end{equation}
with equality on the left if $K$ is a circle, whereas the right-hand side can be approached by slightly modified Reuleaux triangles.
Here, the right-hand side inequality was proven also in \cite{S44}.

The method of Chakerian was modified and generalized by Hann \cite{Hann93} for any convex body $K \subset \Re^m$, under the assumptions that $K$ is either a polytope, or a strictly convex body with $C^2$-class boundary.
More specifically, she proved that if $K$ is any such plane convex body, then
\begin{itemize}
\item $n(K) \leq 12$,
\item $n(K) \leq 8$ if $K$ is centrally symmetric, and
\item $n(K) \leq 6$, if all the centres of curvature of $K$ are contained in $K$.
\end{itemize}
We note that her last estimate is a generalization of the estimate $D(K) \leq 3 \area(K)$ in \cite{C84}.

Furthermore, if $K \subset \Re^m$, then
\begin{equation}\label{eq:gen_estimate}
n(K) \leq \frac{\vol(2K-K)}{\vol(K)}-1.
\end{equation}
If $K$ is centrally symmetric, the bound in (\ref{eq:gen_estimate}) gives $n(K) \leq 3^m-1$, which is attained, for example, for cubes (cf. \cite{Hann93} or \cite{Hug95}).
If $K$ is not symmetric, we may use estimates on the volume of the difference body \cite{RS57}, and the equality of the mixed volumes
$V(K,K,-K)$ and $V(K,-K,-K)$ to obtain numeric upper bounds for $n(K)$ in dimensions $2$ and $3$ (cf. \cite{Hann96}).
Table \ref{tab1} shows the known upper bounds in these dimensions on $n(K)$ over the examined families: those of $m$-dimensional convex bodies, $o$-symmetric convex bodies, convex bodies containing all their centres of curvature, and convex bodies of constant width, denoted by $\K_m$, $\M_m$, $\C_m$ and $\W_m$, respectively. Note that as the last three families are subfamilies of $\K_m$, the upper bound for $\K_m$ trivially holds in any of the four classes.
Here we used the result of Rogers and Shephard \cite{RS57} to estimate the ratio of $\vol(2K-K)$ to $\vol(K)$.

\begin{center}
\begin{table} \label{tab1}
\begin{tabular}{|l||c|c|c|c|}
\hline
 & $\K_m$ & $\M_m$ & $\C_m$ & $\W_m$ \\
\hline
\hline
$m=2$ & $12$ & $8$ & $6$ & $\frac{2\pi}{\pi-\sqrt{3}}$ \\
\hline
$m=3$ & $62$ & $26$ & ? & ? \\
\hline
$m$ & $\left( \frac{3}{2} \right)^m \binom{2m}{m} - 1$ & $3^m -1$ & ? & ? \\
\hline
\end{tabular}
\caption{Upper bounds of $n(K).$ Rows correspond to dimensions ($m$), columns correspond to   convex bodies ($\K_m$), $o$-symmetric convex bodies ($\M_m$), convex bodies containing all their centres of curvature ($\C_m$), and convex bodies of constant width ($\W_m$). Question marks denote unknown upper bounds, to be computed in the current paper.}
\end{table}
\end{center}

The results of Hann were generalized by Hug \cite{Hug95}, who applied a new type of approximation process to obtain the same bounds without the restriction that $K$ be a polytope or sufficiently smooth.
We remark that it is not known if there is any convex body satisfying $n(K) > 3^m-1$, even in the planar case,
and thus, in this regard, the problem of finding the supremum of the quantity $n(K)$ over $\K_m$ is still open.
An interesting result appeared in \cite{D98}, showing that for any convex polygon $P$ in $\Re^2$, we have $4 < n(P)$, where $4$ can be approached by a suitable sequence of polygons. This suggests that $n(K)$ is ``larger'' for polytopes than for smooth convex bodies.

Hann \cite{Hann96} proposed a normed version of the problem. In this, to define normals, she used the so-called Birkhoff orthogonality relation \cite{MS04}.
More specifically, in a Minkowski space with unit ball $M$, we say that a line $L$ is \emph{normal} to a hyperplane $H$, if they have some translates
$L'$ and $H'$, respectively, such that $H'$ is a supporting hyperplane of $M$, $L'$ contains the origin $o$, and $M' \cap L'$ is contained in $\bd M$.
Furthermore, if $K$ is a convex body, then a line $L$ is a \emph{Minkowski normal} (or shortly, \emph{normal}) of $K$, if $L$ is normal to
a hyperplane $H$, supporting $K$ at some point of $L$.
Then, one may define $n_M(K,p)$ as the number of Minkowski normals through $p$ in the Minkowski space with unit ball $M$,
set $I_M(K) = \int_K n_M(K,p) \dif p$, and estimate the ratio $n_M(K) = \frac{I_M(K)}{\vol(K)}$.
Observe that since any normed volume is only a scalar multiple of the standard Lebesgue measure, it does not matter what volume we use
in the definition above.

For this version, Hann \cite{Hann97} proved that if $M$ is a smooth and strictly convex body in the plane, and $K$ is either a polygon or has $C^2$-class boundary,
then
\[
n_M(K) \leq 12,
\]
where $12$ can be replaced by $8$ if $K$ is centrally symmetric, and by $6$ if all the centres of circular curvature of $K$ are inside $K$.
Furthermore, in \cite{Hann99}, she proved the first two estimates for the case that $M$ is a polygon, and $K$ is either a polygon with no side parallel to a side of $M$, or if $K$ has $C^2$-class boundary.

As the last result, we mention an Euler-type formula in \cite{Hann93}, which, for the case $m=3$, has been proved by elementary methods in \cite{CH99}.

\section{New results}\label{sec:newresults}

We present our results in three subsections. In the first one, we try to find the minimizer and maximizers of $n(K)$ in certain families of convex bodies. In the second one, we collected those involving bodies that contain all their centres of curvature;
in particular bodies of constant width.
In the third one we deal with the planar case of the problems in Section~\ref{sec:history}.

\subsection{Minimizers/maximizers of $n(K)$}

A natural problem (cf. \cite{Hann96}) is to try to characterize the convex bodies $K$ for which $n(K)$ is minimal or maximal.
Clearly, as a lower bound, one can give the trivial estimate $n(K) \geq 2$ for any convex body.
This observation was made, for example, in \cite{Hann93}.
We prove the following, stronger version of this observation, which we use in Theorem~\ref{thm:eikonal}.

\begin{thm}\label{thm:lowerbound}
Let $K \in \in \Re^m$ be a convex body, with $C^2$-differentiable boundary. If $n(K) = 2$, then $K$ is a Euclidean ball.
Furthermore, if $K$ is a polytope, then $n(K) > 2$.
\end{thm}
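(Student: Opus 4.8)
The plan is to establish the two assertions separately, after a common first step. Recall first why $n(K,p)\ge 2$ for every $p\in\inter K$: the function $q\mapsto|q-p|$ on $\partial K$ attains its minimum at some $q_-$ and its maximum at some $q_+$, and the inscribed, resp.\ circumscribed, ball centred at $p$ and touching $\partial K$ at $q_\mp$ shows that the line $pq_\mp$ is a normal of $K$ at $q_\mp$; the two feet differ because $p$ is interior. Since $\int_K n(K,p)\,\dif p=2\vol(K)$, the hypothesis $n(K)=2$ forces $n(K,p)=2$ for a.e.\ $p$, i.e.\ for a.e.\ $p$ the nearest‑ and farthest‑point normals are the only normals through $p$. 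Both halves of the theorem amount to contradicting this.

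For the smooth case the main tool is the coarea (change of variables) formula for the inward normal map $E(q,s)=q-s\nu(q)$, where $\nu$ is the outer unit normal, which is $C^1$ because $\partial K\in C^2$. If $\kappa_1(q),\dots,\kappa_{m-1}(q)\ge0$ are the principal curvatures and $s_1(q)$ the length of the normal chord from $q$, then $E$ maps $\{0\le s\le s_1(q)\}$ onto $K$ with Jacobian $J(q,s)=\prod_{i=1}^{m-1}\bigl(1-s\kappa_i(q)\bigr)$, so that $n(K,p)=\#E^{-1}(p)$ for a.e.\ $p$ and
\[
I(K)=\int_{\partial K}\int_{0}^{s_1(q)}\bigl|J(q,s)\bigr|\,\dif s\,\dif q .
\]
At the nearest point the inscribed ball fits, so there $s\kappa_i\le1$ for all $i$ and $J\ge0$; at the farthest point the circumscribed ball contains $K$, so there $s\kappa_i\ge1$ for all $i$ and $\operatorname{sign}J=(-1)^{m-1}$; and a degree argument shows the signed fibre count $\sum_{E(q,s)=p}\operatorname{sign}J(q,s)$ is a constant $c_m=1-(-1)^m$ on $\inter K$. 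The key observation is that if at some $q$ one principal curvature, say $\kappa_1(q)$, is the strictly largest and $s_1(q)\kappa_1(q)>1$, then $J$ changes sign (a fold of $E$) as $s$ crosses $1/\kappa_1(q)$, which — to keep the signed count equal to $c_m$ — forces at least two further normals through $q-s\nu(q)$ for $s$ just past $1/\kappa_1(q)$; hence $n(K,p)\ge 4$ on a set of positive measure and $n(K)>2$. Such a $q$ must occur unless $\partial K$ is totally umbilic: every interior $p$ gives, at $q=f(p)$, the inequalities $s_1(q)\kappa_i(q)\ge|p-f(p)|\,\kappa_i(q)\ge1$ with the first strict since $p\ne\sigma(f(p))$, so $s_1\kappa_{\max}>1$ on the (nonempty) image of $f$; if there the curvatures fail to be everywhere equal we are done, and otherwise — as well as when $m$ is even, where even an umbilic crossing reverses $\operatorname{sign}J$ — we are reduced to $\partial K$ umbilic a.e., hence everywhere by continuity, hence a sphere by the classical characterisation. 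Thus $n(K)=2$ implies $K$ is a ball.

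For a polytope $P$ I would instead exhibit a nonempty open set on which $n(P,p)\ge3$; combined with $n(P,p)\ge2$ everywhere this yields $n(P)>2$. Pick an edge (1‑face) $e$ of $P$ and two facets $F,F'$ containing it, with outer unit normals $u\nparallel u'$; the normal cone $N_P(e)$ is a pointed convex cone in the hyperplane normal to $e$, hence meets the interior of its dual there (a separating hyperplane would have to be $0$). Choosing $w$ with $-w$ in that intersection makes $w$ strictly inward at every $q\in\relint e$, and for $t>0$ small the point $p=q+tw$ lies in $\inter P$ and has: (i) a normal at a point of $\relint e$, a property stable as $p$ ranges over the interior of the full‑dimensional set $\relint e+(-N_P(e))$; (ii) the normal obtained by dropping the perpendicular from $p$ onto $\aff F'$, whose foot lies in $\relint F'$ for $t$ small (a direct computation in the plane $\operatorname{span}(u,u')$); and (iii) the farthest‑point normal. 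These three normals are distinct — $u\nparallel u'$ separates (i) from (ii), and the foot in (iii) is far from $e$ — and all three persist under small perturbations of $p$, so $n(P,p)\ge3$ on a neighbourhood of $p$.

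The step I expect to be the main obstacle is the focal‑sheet bookkeeping in the smooth case: turning ``$J$ changes sign'' into ``two genuine extra normals on a set of positive measure,'' and in particular handling bodies whose focal crossings lie entirely over umbilic (spherical) pieces of $\partial K$ — there the degree argument is silent for odd $m$, and one seems to need either a finer analysis of the normal map near such a piece or a global argument to rule out $n(K)=2$. A lesser technicality is pinning down the admissible range of $t$ and the precise location of the foot in (ii) of the polytope construction in all dimensions.
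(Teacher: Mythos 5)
Your polytope half and your general setup (nearest-- and farthest--point normals give $n(K,p)\ge 2$ in the interior, so $n(K)=2$ forces $n(K,p)=2$ a.e.) are sound, and the edge construction can be completed along the lines you sketch. The genuine gap is in the smooth case, and it is exactly the one you flag yourself: the umbilic scenario. Your reduction only yields umbilicity of $\bd K$ at points in the image of the farthest--point map, and that image need not be dense, let alone of full measure, in $\bd K$ (for an elongated body the farthest points of interior points cluster near the two ``ends''); so ``umbilic a.e.\ on $\bd K$, hence everywhere, hence a sphere'' does not follow. Even total umbilicity of a proper open piece of $\bd K$ would only make that piece spherical, not $K$ a ball. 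Moreover, the mechanism behind your key claim is index bookkeeping rather than the sign change of $J$ per se: for $p$ on the normal just past the first focal distance of a \emph{non-umbilic} $q$, the critical point at $q$ has index $k$ with $1\le k\le m-2$, hence is distinct from the nearest point (index $0$) and the farthest point (index $m-1$), and the identity $\sum_i(-1)^i n_i(K,p)=1+(-1)^{m-1}$ forces a fourth critical point; nondegeneracy off the caustic then spreads this to an open set. That part can be completed. But at an umbilic $q$ the post-focal critical point has index $m-1$ and may \emph{coincide} with the farthest point, so the signed count detects nothing; ruling out this coincidence (for every admissible $p$) amounts to showing $K$ is squeezed between inscribed and circumscribed balls of radii $1/\kappa$ and $1/\kappa+\varepsilon$, i.e.\ to proving directly that $K$ is a ball --- which is the missing argument, not a removable technicality.

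For comparison, the paper avoids all of this machinery: it takes the point $p^*\in\bd K$ where the \emph{globally maximal} principal curvature $\kappa$ is attained, uses Blaschke's Rolling Ball Theorem to place the ball of radius $1/\kappa$ about the corresponding centre of curvature inside $K$, and then, for a reference point $r$ on the normal at $p^*$ slightly beyond that centre, observes that $p^*$ is a critical point of $x\mapsto|x-r|$ which is neither its minimum nor (since $K$ is not a ball) its maximum; this gives three normals through $r$, and $C^2$-continuity gives an open set of such reference points, so $n(K)>2$. The choice of the global curvature maximum together with the explicit ``$K$ is not a ball'' step is precisely what dissolves the umbilic difficulty your degree argument cannot handle; if you want to salvage your route, you need to add such an argument (e.g.\ the squeeze between the two balls as $\varepsilon\to 0$) at the umbilic points.
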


\begin{proof}
First, assume that $K$ has $C^2$-class boundary, and it is not a Euclidean ball. Let $\kappa$ be the maximum of the principal curvatures of $\bd K$. Let $p \in \bd K$ be a point where this maximum is attained, and let $q$ be the corresponding centre of curvature of $K$.
By Blaschke's Rolling Ball Theorem \cite{Blaschke},  the sphere $q + \frac{1}{\kappa} \Sph^{n-1}$ is contained in $K$.
Then the Euclidean distance function $x \mapsto |x-q|$, where $x \in \bd K$, attains its absolute minimum at $p$.

Let $r \notin [p,q]$ be a point of the normal of $K$ at $p$, sufficiently close to $q$.
Clearly, $p$ is a critical point, but not a local (and thus an absolute) minimum of the function $x \mapsto |x-r|$, where $x \in \bd K$.
On the other hand, this function attains its minimum and its maximum at some points, which, since $K$ is not a Euclidean ball, are different from $p$,
which yields that $n(K,q) \geq 3$. By the $C^2$-differentiability of the Euclidean distance function, the same holds in a neighborhood of $q$, and thus,
$n(K) > 2$.

To prove the assertion for the case that $K$ is a polytope, we may apply a similar argument, replacing the inner normal at $p$ by a chord connecting
two farthest vertices of $K$.
\end{proof}

To find the maximizers of $n(K)$ seems much more complicated, especially since, numerically, not even the maximal value is known in $\K_m$.
This question seems more approachable if the maximum is taken over $K \in \M_m$, since for this family, the maximal value is known to be $3^m -1$.
Regarding this problem, Hug \cite{Hug95} showed that if $P$ is a centrally symmetric convex polygon, then $n(P) = 8$ if, and only if,
$P$ is inscribed in a circle.
In \cite{Hann96}, Hann remarked that this method can be generalized to any dimension, and yields that if $P$ is a centrally symmetric
convex polytope, then $n(P) = 3^m -1$ if, and only if, $P$ is \emph{blocklike}; that is, a centrally symmetric polytope inscribed in a sphere.
Nevertheless, as Theorem~\ref{thm:equality} shows, this remark is incorrect.
Before formulating it, let us recall that a zonotope is the Minkowski sum of finitely many closed segments; or equivalently, a centrally symmetric polytope with centrally symmetric faces \cite{McMullen}.

\begin{thm}\label{thm:equality}
Let $P \in \M_m$ be a convex polytope with $n(P) = 3^m -1$.
Then we have the following.
\begin{enumerate}
\item[\ref{thm:equality}.1] $P$ is a zonotope.
\item[\ref{thm:equality}.2] If $m=3$, then the angles of any face of $P$ are not acute, and $P$ has a rectangle-shaped face.
\end{enumerate}
\end{thm}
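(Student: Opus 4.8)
The plan is to locate the equality case of the estimate \eqref{eq:gen_estimate}, recalling that for an $o$--symmetric $P$ one has $2P-P=3P$, so \eqref{eq:gen_estimate} reads $n(P)\le 3^m-1$. First I would recast the proof of \eqref{eq:gen_estimate} so that equality becomes transparent. For $p\in\inter P$ the normals of $P$ through $p$ correspond bijectively to the proper faces $G$ of $P$ for which the orthogonal projection $q$ of $p$ onto $\aff G$ lies in the relative interior of $G$ and $q-p$ lies in the normal cone $N(G)$ of $P$ at $G$; hence $n(P,p)=\#\{G:\,p\in R_G\}$ with $R_G:=(\text{rel.\ int.\ of }G)-N(G)$, and $I(P)=\sum_G\vol(R_G\cap P)$. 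The reflection $\phi_G$ in the flat $\aff G$ is an isometry carrying $R_G$ onto the outward region $\widehat R_G:=(\text{rel.\ int.\ of }G)+N(G)$, and every point of $\phi_G(P)\cap\widehat R_G$ has the form $2q-x$ with $q\in G$ and $x\in P$, so $\phi_G(P)\cap\widehat R_G\subseteq 2P-P$. Since the $\widehat R_G$ tile $\Re^m\setminus\inter P$ and $P\subseteq 2P-P$, summing yields \eqref{eq:gen_estimate}. Consequently $n(P)=3^m-1$ forces, for \emph{every} proper face $G$, the local equality $\phi_G(P)\cap\widehat R_G=(2P-P)\cap\widehat R_G$ up to a null set; equivalently, for every $q$ in the relative interior of $G$ and every $w\in N(G)$, $q+w\in 2P-P$ implies $q-w\in P$ (the converse implication holds automatically, by $o$--symmetry).

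Next I would convert this local equality into a statement about a boundary function. Fix a proper face $G$, choose a unit vector $u$ in the relative interior of $N(G)$ so that $G$ is exactly the face of $P$ with outer normal $u$, project $P$ along $u$, and let $\tau$ be the concave ``upper boundary'' of $P$ in direction $u$, so that $\tau$ attains its maximum $h_P(u)$ precisely on the projection $\bar G$ of $G$; by $o$--symmetry the lower boundary is $-\tau(-\,\cdot\,)$. Testing the local equality at $G$ with $w=tu$ translates, after a short computation, into $3\tau(\bar q/3)\le 2\tau(\bar q)+\tau(-\bar q)$ for all $\bar q\in\bar G$. Since $\bar q/3=\tfrac23\bar q+\tfrac13(-\bar q)$, concavity of $\tau$ forces equality here, so $\tau$ is \emph{affine on every segment} $[-\bar q,\bar q]$ with $\bar q\in\bar G$. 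If $0\notin\bar G$, then a ray from $o$ meeting $\bar G$ in a nondegenerate segment would make $\tau$ affine there and equal to its maximum at two points, hence constant there, putting $0\in\bar G$ --- a contradiction; so $0\in\bar G$, and then affineness on $[-\bar q,\bar q]$ together with $\tau\equiv h_P(u)$ on $\bar G\ni 0$ gives $\bar G=-\bar G$, i.e.\ $G$ is centrally symmetric, in fact about the foot of the perpendicular from $o$ to $\aff G$. For facets this goes through directly, since $\bar G$ is full--dimensional and a suitable ray exists (for $m=2$ this recovers ``$P$ inscribed in a circle''); for a general $2$--face one first has to choose $u\in N(G)$ with $\Re u$ meeting $\aff G$ while still lying in the relative interior of $N(G)$. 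Granting this, every $2$--face, hence every face, of $P$ is centrally symmetric, so $P$ is a zonotope \cite{McMullen}; this is \ref{thm:equality}.1.

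For \ref{thm:equality}.2 assume $m=3$, so the $2$--faces of $P$ are its facets, which by part~\ref{thm:equality}.1 are centrally symmetric polygons. That a facet $F$ has no acute angle should follow from the local equalities at the edges and vertices of $F$: an acute angle at a vertex $v$ of $F$ makes the tangent cone of $P$ at $v$ too narrow in a direction normal to a neighbouring facet, so that $q+w\in 2P-P$ but $q-w\notin P$ for a suitable $w$ in $N(v)$ near that normal, contradicting the equality at $v$. For the rectangular face: each facet has an even number of sides (being centrally symmetric), and a $3$--polytope cannot have every facet with at least six sides, since Euler's formula $V-E+F=2$ together with $2E\ge 6F$ and $2E\ge 3V$ would give $2\le 0$; hence some facet has at most four, so exactly four, sides. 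A centrally symmetric quadrilateral is a parallelogram, and a parallelogram with no acute angle is a rectangle.

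The main obstacle is the passage from the local equalities to the geometric conclusions: turning ``$\tau$ is affine on every central segment through $\bar G$'', together with the analogous assertions at all subfaces of $G$, into ``every $2$--face is centrally symmetric'' and, in dimension $3$, into the absence of acute angles. The delicate points are the existence of a direction $u$ in the \emph{relative interior} of the normal cone of a lower--dimensional face along which the projection argument can be run, and the simultaneous bookkeeping of the equality conditions at a face and at all of its subfaces; this is the technical heart of the proof.
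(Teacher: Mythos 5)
Your framework is the same one the paper (following Hann) uses: wedges parametrized by the nearest--point map, their reflections packing $(2P-P)\setminus P$, and the resulting pointwise equality condition ``$q+w\in 2P-P$ implies $q-w\in P$'' for $q\in\relint G$, $w\in N(G)$. Where you diverge is in how you exploit it: the paper only extracts the consequence that every normal chord of $P$ is an affine diameter and then argues that the reflection of a face about the parallel linear subspace is again a face, whereas you test the equality with the maximal admissible $w=tu$ and obtain $3\tau(\bar q/3)=2\tau(\bar q)+\tau(-\bar q)$, forcing $\tau$ to be affine on the central segments $[-\bar q,\bar q]$. For facets this is correct (and even yields the sharper statement that each facet is symmetric about the foot of the perpendicular from $o$, consistent with Hug's planar characterization), and your Euler-formula count forcing a parallelogram face coincides with the paper's.

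However, two steps are genuinely missing. First, \ref{thm:equality}.1 is asserted for every $m$, and for a $2$-face $G$ that is not a facet your projection argument needs a unit $u\in\relint N(G)$ whose line meets $\aff G$; since such a $u$ is orthogonal to the direction space of $G$, the only possible candidate is the direction of the perpendicular foot of $o$ on $\aff G$, and nothing in your argument shows that this vector lies in $N(G)$, let alone in its relative interior. So as written you prove central symmetry only of facets, which for $m\ge 4$ does not give a zonotope (the $24$-cell is $o$-symmetric with centrally symmetric facets but has triangular $2$-faces); the lower-dimensional faces, which you yourself flag, are exactly what still has to be handled, e.g.\ by the paper's route through affine diameters. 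Second, the ``no acute angles'' half of \ref{thm:equality}.2 is only a heuristic: you assert that an acute face angle at a vertex $v$ makes the tangent cone ``too narrow'', so that some $w\in N(v)$ gives $q+w\in 2P-P$ but $q-w\notin P$, yet you never construct such a $w$ or verify either membership. The paper's argument here is concrete and different: at an edge $E=[x,y]$ it observes that the segments through points of $E$ parallel to $[x,-y]$ are affine diameters, rotates them slightly about the line of $E$ inside the wedge $U_E$ so that they remain affine diameters, and concludes that they must still end on $-F_1\cup(-F_2)$, which is what excludes acute angles. Without an explicit construction of this kind the angle claim, and hence also the rectangle conclusion (which needs ``parallelogram with no acute angle''), is not established.
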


\begin{proof}
Consider some polytope $P \in \M_m$ satisfying $n(P) = 3^m -1$. Let $\F$ denote the family of faces of $P$.
For any face $F \in \F$, let $U_F$ denote the intersection of $P$ with the union of all the normals of $P$ through a point of $F$.
We note that if $\dim F = i$, then this set is called in \cite{Hann93} the \emph{$(m-i)$-wedge} of $P$, corresponding to $F$.
Let $U_F^s$ be the reflection of $U_F$ about the affine hull $\aff F$ of $F$.
The sets $U_F^s$ are mutually nonoverlapping for any two faces of $P$.

By \cite{Hann93}, we have $P \cup \bigcup_{F \in \F} U_F^s \subseteq 2P-P = P + (P-P)$.
Thus, for any $P \in \M_n$, $n(P) = 3^m -1 $ is equivalent to
\begin{equation}\label{eq:zonotopes}
P \cup \bigcup_{F \in \F} U_F^s = 2P-P = P + (P-P).
\end{equation}
This equality implies that the intersection of every normal of $P$, with $P$, is an affine diameter of $P$.

Let $F$ be any face of $P$, and $H_F$ be the \emph{linear} subspace parallel to $\aff F$.
Then the reflection of $F$ about $H_F$ is also a face of $P$.
Combining this property with the fact that $P$ is $o$-symmetric, we obtain that every face of $P$ is centrally symmetric, which yields
that $P$ is a zonotope (cf. \cite{McMullen}).

Now let $m=3$, and consider any edge $E=[x,y]$ of $P$.
Let $F_1$ and $F_2$ be the two faces of $P$ containing $[x,y]$.
Let $H$ be a plane supporting $P$ at exactly $[x,y]$.
Then $[-x,-y]$ is also an edge of $P$, and any segment $[u,v]$, with $u \in [x,y]$ and parallel to $[x,-y]$,
is an affine diameter of $P$.
Slightly rotating these segments about the line containing $E$, they remain inside $U_E$, and thus, these rotations are still affine diameters of $P$.
Hence, all these segments intersect $-F_1$ or $-F_2$.
From this, the first half of the assertion follows.

Finally, if $P$ has $f$ faces and $e$ edges, then, since the edge graph of its dual is planar, we have $e \leq 3f-6$.
From this, it follows that some face of $P$ contains strictly less than $6$ vertices.
More specifically, since each face of $P$ is centrally symmetric, it follows that $P$ has at least two pairs of parallelogram faces.
On the other hand, we have seen that no two consecutive edges of $P$ meet at an acute angle, which yields that any parallelogram face of $P$ is a rectangle.
\end{proof}

We note that the first part of (\ref{thm:equality}.2) holds also for $m > 3$, for the angles between two $(m-2)$-dimensional faces of $P$, contained in the same facet.

\subsection{Bodies containing all their centres of curvature}\label{subsec:curvature}

First, we generalize the methods of Chakerian \cite{C84} and Hann \cite{Hann93} for such plane convex bodies, for any dimension.

\begin{thm}\label{thm:constantwidth}
If $K$ is a convex body with $C^2$-class boundary and containing all of its centres of curvature, then $I(K) \leq \vol(K-K)$, and
$n(K) \leq \binom{2m}{m}$.
\end{thm}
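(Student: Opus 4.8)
The plan is to deduce the bound $n(K)\le\binom{2m}{m}$ from the sharper inequality $I(K)\le\vol(K-K)$: once this is known, the Rogers--Shephard inequality $\vol(K-K)\le\binom{2m}{m}\vol(K)$ (already invoked in the excerpt, cf.\ \cite{RS57}) together with Definition~\ref{defn:equilibrium} gives $n(K)=I(K)/\vol(K)\le\binom{2m}{m}$. So the real content is $I(K)\le\vol(K-K)$, which should be seen as the $m$-dimensional extension of Chakerian's planar estimate $D(K)\le\tfrac12\area(K-K)$ and of Hann's planar bound $n(K)\le 6$ for bodies containing their centres of curvature.

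To get a usable formula for $I(K)$, note first that the hypothesis forces every principal curvature to be strictly positive (a vanishing principal curvature would push a centre of curvature to infinity), so $\bd K$ is strictly convex with a unique outer unit normal $\nu(q)$ at each $q$. Consider the normal map $\Phi(q,\lambda)=q-\lambda\,\nu(q)$ on $\{(q,\lambda): q\in\bd K,\ 0\le\lambda\le\ell(q)\}$, where $\ell(q)$ is the length of the normal chord of $K$ issuing from $q$. Since an interior point $p$ lies on the normal at $q$ exactly when $p=q-\lambda\nu(q)$ for some $\lambda\in[0,\ell(q)]$, the map $\Phi$ covers $K$ with multiplicity $n(K,p)$ for a.e.\ $p$, and the change-of-variables (area) formula yields
\[
I(K)=\int_{\bd K}\int_0^{\ell(q)}\bigl|\det\bigl(\mathrm{Id}-\lambda W_q\bigr)\bigr|\,\dif\lambda\,\dif\sigma(q)=\int_{\bd K}\int_0^{\ell(q)}\prod_{i=1}^{m-1}\bigl|1-\lambda\kappa_i(q)\bigr|\,\dif\lambda\,\dif\sigma(q),
\]
with $W_q$ the Weingarten map, $\kappa_1(q),\dots,\kappa_{m-1}(q)$ the principal curvatures, and $\sigma$ surface measure on $\bd K$. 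The assumption that $K$ contains all its centres of curvature is precisely the statement $1/\kappa_i(q)\le\ell(q)$ for all $q$ and $i$ --- i.e.\ every focal point of $\bd K$ along a normal chord actually lies on that chord.

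The heart of the argument is to realize the right-hand side as the volume of a subset of $K-K$. The idea, generalizing Chakerian and Hann, is to reflect each normal chord in the tangent hyperplane at its foot $q$ --- replacing $\Phi(q,\lambda)$ by $\Phi^{\ast}(q,\lambda)=q+\lambda\nu(q)$ --- and to cut each chord at the successive focal values $\lambda=1/\kappa_i(q)$, which by hypothesis all occur before the chord exits $K$. On each subinterval between consecutive focal values $\det(\mathrm{Id}-\lambda W_q)$ has constant sign, so $\Phi^{\ast}$ is there a local diffeomorphism with Jacobian $\prod_i|1-\lambda\kappa_i(q)|$; the task is to prove that the resulting folded reflected pieces can be arranged --- after a single translation, exploiting the translation invariance of $K-K$ --- into a family of mutually non-overlapping sets of total volume $I(K)$ all contained in $K-K=K+(-K)$. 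It is the centre-of-curvature hypothesis that allows $K+(-K)$ here, rather than the larger body $2K-K$ appearing in Hann's unconditional estimate $\vol(K)+I(K)\le\vol(2K-K)$. A volume comparison then gives $I(K)\le\vol(K-K)$, and the bound is essentially sharp: within the constant-width subclass one has $I(K)=2D(K)$ and $K-K$ is a Euclidean ball, and near-Reuleaux bodies approach equality.

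I expect the main obstacle to be exactly this last reassembly. The naive estimate $\prod_i|1-\lambda\kappa_i|\le\prod_i(1+\lambda\kappa_i)$ is far too weak (already for a ball it overshoots $\vol(K-K)$ when $m\ge 2$), so one genuinely has to use the sign changes of $\det(\mathrm{Id}-\lambda W_q)$ across the focal set, i.e.\ the cancellation coming from the \emph{evolute} of $\bd K$. Controlling the overlaps of the folded chord-pieces near the focal set, and choosing the translation that places the whole construction inside $K-K$, is where the geometry of bodies containing their centres of curvature must enter in an essential way; the remainder is bookkeeping with the area formula.
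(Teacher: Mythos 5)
Your reduction of $n(K)\le\binom{2m}{m}$ to $I(K)\le\vol(K-K)$ via Rogers--Shephard, your use of the area formula to write $I(K)=\int_{\bd K}\int_0^{\ell(q)}\prod_{i=1}^{m-1}|1-\lambda\kappa_i(q)|\,\dif\lambda\,\dif\sigma(q)$, and your reading of the hypothesis as $1/\kappa_i(q)\le\ell(q)$ all match the paper. But the step you yourself flag as the ``main obstacle'' --- reflecting the normal chords, cutting them at the focal values, and reassembling the folded pieces, after a translation, into mutually non-overlapping subsets of $K-K$ --- is precisely the missing proof, and it is not how the argument goes. A packing argument of this kind is what underlies Hann's unconditional bound $\vol(K)+I(K)\le\vol(2K-K)$ (disjoint reflected wedges), but for the sharper bound $\vol(K-K)$ under the curvature hypothesis no such geometric decomposition is produced, and controlling the overlaps of folded pieces from different chords near the evolute is exactly what you have not done; as stated, your proposal establishes only the set-up, not the inequality.

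The paper avoids any global geometric reassembly by making the estimate pointwise in the normal direction. Writing the integral over the Gauss sphere, $I(K)=\int_{\Sph^{m-1}}\int_0^{L(u)}\prod_{i=1}^{m-1}|\lambda-\rho_i|\,\dif\lambda\,\dif u$, the hypothesis gives $0\le\rho_i\le L(u)\le D(u)$, where $D(u)$ is the length of the longest chord of $K$ in direction $u$. Then one needs only the one-dimensional analytic fact (Lemma~\ref{lem:inequality}, proved via the generalized H\"older inequality of Lemma~\ref{lem:generalCSB}) that for a monic polynomial with all roots in $[0,a]$ one has $\int_0^a\prod_i|\lambda-x_i|\,\dif\lambda\le a^m/m$, reducing to $\int_0^a|\lambda-x_0|^{m-1}\dif\lambda=\frac{x_0^m+(a-x_0)^m}{m}\le\frac{a^m}{m}$. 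This is the exact ``cancellation across the focal set'' you were looking for, captured analytically rather than by folding sets. The last ingredient you are missing is that no embedding into $K-K$ is needed at all: since $D(u)$ is the radial function of the difference body, $\frac{1}{m}\int_{\Sph^{m-1}}D^m(u)\,\dif u=\vol(K-K)$ identically, which together with the chordwise bound gives $I(K)\le\vol(K-K)$ directly (this is also what Chakerian's planar proof does with $\frac12\int_0^{2\pi}D^2(\theta)\,\dif\theta=\area(K-K)$). To repair your write-up, replace the reflection-and-packing scheme by this chordwise estimate and the radial-function identity.
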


For the proof, we need the following two lemmas.

\begin{lem}\label{lem:generalCSB}
If $f_1,f_2, \ldots, f_k$ are nonnegative integrable functions on $[a,b]$, then
\[
\int_a^b \prod_{i=1}^k f_i \leq \prod_{i=1}^k \sqrt[k]{\int_a^b f_i^k }.
\]
\end{lem}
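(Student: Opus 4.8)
The plan is to prove this as a direct consequence of the general (multi-function) H\"older inequality. Recall that H\"older's inequality in the form with exponents $p_1,\dots,p_k$ satisfying $\sum_{i=1}^k 1/p_i = 1$ reads
\[
\int_a^b \prod_{i=1}^k g_i \leq \prod_{i=1}^k \left( \int_a^b g_i^{p_i} \right)^{1/p_i}.
\]
The key observation is that the inequality we want is precisely the symmetric special case $p_1 = p_2 = \cdots = p_k = k$, since then $\sum 1/p_i = k \cdot (1/k) = 1$, and the right-hand side becomes $\prod_{i=1}^k \left( \int_a^b g_i^k \right)^{1/k} = \prod_{i=1}^k \sqrt[k]{\int_a^b f_i^k}$ upon setting $g_i = f_i$.

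First I would state explicitly that we may assume this generalized H\"older inequality as known (it follows from the two-function case by an easy induction on $k$, or directly from the weighted AM--GM inequality applied pointwise: for nonnegative reals $a_1,\dots,a_k$ one has $\prod a_i^{1/k} \le \frac{1}{k}\sum a_i$, applied to $a_i = f_i(x)^k / \int_a^b f_i^k$ after normalizing, then integrated in $x$). Second, I would handle the degenerate cases separately: if some $\int_a^b f_i^k = 0$, then $f_i = 0$ almost everywhere, so the left-hand side is $0$ and the inequality holds trivially; hence we may assume every $\int_a^b f_i^k$ is strictly positive and the normalization above is legitimate. Third, with normalized functions $\tilde f_i = f_i / \left(\int_a^b f_i^k\right)^{1/k}$, the pointwise AM--GM bound $\prod_i \tilde f_i(x) \le \frac{1}{k}\sum_i \tilde f_i(x)^k$ integrates to $\int_a^b \prod_i \tilde f_i \le \frac{1}{k}\sum_i \int_a^b \tilde f_i^k = \frac{1}{k}\sum_i 1 = 1$, which is exactly the claimed inequality after clearing the normalizing constants.

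There is essentially no obstacle here; the only points requiring a word of care are the measurability of the product $\prod_i f_i$ (immediate, being a finite product of integrable hence measurable functions) and the degenerate case where a factor vanishes identically, which I would dispatch in one sentence as above. Since the statement is a textbook inequality presented as a lemma purely for later convenience, I would keep the proof to a few lines, citing the generalized H\"older inequality (or, equivalently, deriving it from pointwise AM--GM as sketched) rather than reproving it from scratch.
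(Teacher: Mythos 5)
Your proposal is correct. The paper proves the lemma by induction on $k$: the case $k=1$ is trivial, and the inductive step splits off the last factor using the two-function H\"older inequality with exponents $p=k+1$ and $q=\frac{k+1}{k}$, then applies the inductive hypothesis to $\prod_{i=1}^k f_i^{(k+1)/k}$. Your primary route is different and somewhat more direct: you recognize the statement as the equal-exponent case of the generalized H\"older inequality and derive it in one stroke from the pointwise weighted AM--GM bound $\prod_i a_i^{1/k} \le \frac{1}{k}\sum_i a_i$ applied to the normalized functions $\tilde f_i = f_i/(\int_a^b f_i^k)^{1/k}$, after disposing of the degenerate case $\int_a^b f_i^k = 0$. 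Both arguments are standard and complete; the paper's induction has the advantage of invoking only the classical two-function H\"older inequality as a black box, while your normalization-plus-AM--GM argument avoids induction altogether and makes the equality structure more transparent. Your attention to the degenerate case and to measurability of the product is a small point of extra care that the paper omits; the only further remark one could add (relevant to both proofs) is that if some $\int_a^b f_i^k$ is infinite the inequality holds vacuously, so one may assume all these integrals are finite.
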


\begin{proof}
We prove by induction.
For $k=1$, the assertion trivially follows.
Assume that it is true for some $k$, and consider nonnegative, integrable functions $f_1, \ldots, f_{k+1}$.
Then, using H\"older's inequality with $p=k+1$ and $q=\frac{k+1}{k}$ and applying the inductive hypothesis, we have
\[
\int_a^b \prod_{i=1}^{k+1} f_i \leq \sqrt[k+1]{\left(\int_a^b \prod_{i=1}^k f_i^{\frac{k+1}{k}} \right)^k} \sqrt[k+1] \int_a^b f_{k+1}^{k+1} \leq
\left( \prod_{i=1}^k \sqrt[k+1]{ \int_a^b f_i^{k+1} } \right) \sqrt[k+1]{\int_a^b f_{k+1}^{k+1} }.
\]
\end{proof}

\begin{lem}\label{lem:inequality}
Let $p(x)= \prod_{i=1}^k (x-x_i)$, where for every $i$, we have $0 \leq x_i \leq a$ for some given $a \in \Re$.
Then the maximum of $\int_0^a |p(x)| \dif x$ under these conditions is
\[
\int_0^a x^k \dif x = \frac{a^{k+1}}{k+1}.
\]
\end{lem}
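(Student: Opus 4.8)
The plan is to view $\int_0^a|p(x)|\,\dif x$ as a function $F$ of the root vector $(x_1,\dots,x_k)\in[0,a]^k$, to show that $F$ is convex in each coordinate separately, and then to use the fact that such a function attains its maximum at a vertex of the cube, where the integral becomes elementary.

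First I would set $F(x_1,\dots,x_k)=\int_0^a\prod_{i=1}^k|x-x_i|\,\dif x$. For a fixed $x\in[0,a]$ and a fixed index $j$, the map $t\mapsto|x-t|$ is convex, so $t\mapsto\bigl(\prod_{i\ne j}|x-x_i|\bigr)\cdot|x-t|$ is a nonnegative constant times a convex function, hence convex; integrating in $x$ over $[0,a]$ preserves convexity, so $x_j\mapsto F$ is convex for every choice of the remaining coordinates. Moreover $F$ is continuous on $[0,a]^k$. (Alternatively one may write $p=(x-x_1)q$ and compute $\partial F/\partial x_1=\int_0^{x_1}|q|\,\dif x-\int_{x_1}^a|q|\,\dif x$, which is nondecreasing in $x_1$, giving convexity directly; but the "integral of convex functions" viewpoint is cleaner and needs no smoothness of $q$.)

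Next I would invoke the elementary principle that a continuous function on a box which is convex in each variable separately attains its maximum at a vertex: holding all but one coordinate fixed, the resulting one-variable function is convex on a compact interval, hence maximized at an endpoint, so that coordinate may be moved to $0$ or $a$ without decreasing $F$; iterating over all $k$ coordinates drives the maximizer to a vertex. Thus it suffices to estimate $F$ when each $x_i\in\{0,a\}$. If exactly $j$ of the roots equal $a$ and the other $k-j$ equal $0$, then $p(x)=x^{k-j}(x-a)^j$, and with the substitution $x=at$ one obtains
\[
\int_0^a|p(x)|\,\dif x=\int_0^a x^{k-j}(a-x)^j\,\dif x=a^{k+1}\,\frac{(k-j)!\,j!}{(k+1)!}.
\]

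Finally, over $j\in\{0,1,\dots,k\}$ the product $(k-j)!\,j!$ is largest at the endpoints $j=0$ and $j=k$, where it equals $k!$, so $F\le a^{k+1}\,\frac{k!}{(k+1)!}=\frac{a^{k+1}}{k+1}=\int_0^a x^k\,\dif x$, with equality attained when all roots coincide at $0$ (or all at $a$). The only step requiring a little care is the separate-convexity/vertex reduction; the remaining computations are routine.
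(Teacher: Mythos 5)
Your proof is correct, but it takes a genuinely different route from the paper's. The paper first establishes the generalized H\"older-type inequality of Lemma~\ref{lem:generalCSB}, $\int_a^b \prod_{i=1}^k f_i \leq \prod_{i=1}^k \bigl(\int_a^b f_i^k\bigr)^{1/k}$, applies it with $f_i(x)=|x-x_i|$ to decouple the roots, and thereby reduces the lemma to the single-root computation $\int_0^a |x-x_0|^k \dif x = \frac{x_0^{k+1}+(a-x_0)^{k+1}}{k+1} \leq \frac{a^{k+1}}{k+1}$. You instead treat the extremal problem directly: coordinatewise convexity of $F(x_1,\dots,x_k)=\int_0^a\prod_{i=1}^k|x-x_i|\,\dif x$ (convexity of $t\mapsto|x-t|$, multiplication by a nonnegative factor independent of $t$, integration in $x$) pushes the maximum to a vertex of the cube $[0,a]^k$, and at a vertex with $j$ roots at $a$ the Beta integral $\int_0^a x^{k-j}(a-x)^j\dif x = a^{k+1}\frac{(k-j)!\,j!}{(k+1)!}$ is largest for $j\in\{0,k\}$, giving $\frac{a^{k+1}}{k+1}$. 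All of these steps are sound, including the vertex-reduction principle, which you justify by moving one coordinate at a time to an endpoint. What your approach buys: it is self-contained and bypasses Lemma~\ref{lem:generalCSB} entirely, and it yields sharper information, namely the exact value of the integral at every vertex configuration and the identification of the extremal configurations (all roots at a single endpoint), whereas the paper's H\"older route only certifies the upper bound, which is all that the proof of Theorem~\ref{thm:constantwidth} requires; the paper's route, on the other hand, is shorter once its auxiliary lemma is in place and stays within the integral-inequality toolkit used elsewhere in that argument.
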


\begin{proof}
By Lemma~\ref{lem:generalCSB}, we have
\[
\int_0^a |p(x)| \dif x \leq \prod_{i=1}^k \sqrt[k]{ \int_0^a |x-x_i|^k \dif x }
\]
and it suffices to prove that for every $x_0 \in [0,a]$ and positive integer $k$, we have
\[
\int_0^a |x-x_0|^k \dif x \leq \frac{a^{k+1}}{k+1}.
\]

Now, observe that
\[
\int_0^a |x-x_0|^k \dif x = \frac{x_0^{k+1}+(a-x_0)^{k+1}}{k+1} \leq \frac{a^{k+1}}{k+1},
\]
which yields the assertion.
\end{proof}


\begin{proof}[Proof of Theorem~\ref{thm:constantwidth}]
For convenience, we assume that $K$ has nowhere vanishing curvature, and remark that this method works also in the general case.

Let $p \in \bd K$, and parametrize a neighborhood of $p \in \bd K$ with local coordinates $(u_1,u_2,\ldots,u_{m-1})$.
Let $N(u_1,\ldots,u_{m-1})$ denote the inner unit normal vector of $\bd K$ at $p$.
Let $L(p)$ or equivalently, $L(u_1,u_2,\ldots,u_{m-1})$ denote the length of the part of the normal of $K$ at $p$, contained in $K$.
Consider the mapping
\begin{equation}\label{eq:mapping}
r(u_1,\ldots,u_{n-1},\lambda) = r(u_1,\ldots,u_{m-1}) + \lambda N(u_1,\ldots,u_{m-1}),
\end{equation}
where $0 \leq \lambda \leq L(u_1,u_2,\ldots,u_{m-1})$. Let $R$ denote the domain of this mapping.
By a theorem of Federer \cite[p.243]{F69}, we have
\begin{equation}\label{eq:Jacobi}
\int_K n(K,p) \dif p = \int_{R} |J(u_1,\ldots,u_{m-1},\lambda)| \dif \lambda \dif u_1 \ldots \dif u_{m-1},
\end{equation}
where $J$ is the Jacobian of the map in (\ref{eq:mapping}).

Note that $J$ is an $(m-1)$-degree polynomial of $\lambda$ with the determinant
$| \partial_{u_1} N, \ldots, \partial_{u_{m-1}} N, N|$ as its main coefficient.
On the other hand, it is known (cf. \cite{Hann93} or \cite{Spivak}) that this polynomial has only real roots, which are equal to the principal radii of curvature of $\bd K$ at the given point.
Thus, denoting these radii by $\rho_i$, $i=1,2,\ldots,n-1$, we can rewrite the integral in the form
\begin{equation}\label{eq:atteres}
\int_{R} |J(u_1,\ldots,u_{m-1},\lambda)| \dif u_1 \ldots \dif u_{m-1} = \int_{\Sph^{m-1}} \int_0^{L(u)} \prod_{i=1}^{m-1}|\lambda - \rho_i| \dif \lambda \dif u,
\end{equation}
where
\[
\dif u = | \partial_{u_1} N, \ldots, \partial_{u_{m-1}} N, N| \dif u_1 \ldots \dif u_{m-1}
\]
is the surface area element of the sphere $\Sph^{m-1}$, and $L(u)$ is the length of the part of the normal with tangent vector $u$, contained in $K$.

Let $D(u)$ be the length of the longest chord of $K$ in the direction of $u \in \Sph^{m-1}$.
Under our assumptions, for any $u \in \Sph^{m-1}$ and $i=1,2,\ldots,m-1$ we have that $0 \leq \rho_i \leq L(u) \leq D(u)$.
Thus, by Lemma~\ref{lem:inequality}, we have,
\[
I(K) = \int_{\Sph^{m-1}} \int_0^{D(u)} \prod_{i=1}^{m-1}|\lambda - \rho_i| \dif \lambda \dif u \leq \frac{1}{m} \int_{\Sph^{m-1}} D^m(u) \dif u = \vol(K-K).
\]
The second estimate follows from the inequality in \cite{RS57} for the volume of the difference body of $K$.
\end{proof}

\begin{thm}\label{thm:eikonal}
Let $K \subset \Re^m$ be a convex body, containing all its centres of curvature, with a $C^2$-class boundary.
Consider the deformation $C: \Sph^{m-1} \times [0,T) \mapsto $, of the embedding $C(\Sph^{m-1},0)=\bd K$, satisfying
$\frac{\partial C}{\partial t} = N$, where $N$ is the outer unit normal of $C$.
For any $t \in [0,T)$, let $K(t)$ denote the convex body bounded by $C(\Sph^{m-1},t)$. 
If $K$ is not a Euclidean ball, then $n(K(t))$ is a strictly increasing function of $t$.
If $K$ is a Euclidean ball, $n(K(t)) = 2$ for any value of $t$.
\end{thm}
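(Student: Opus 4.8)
The plan is to recognise the flow as the uniform inward normal deformation, reduce the theorem to a single lemma about boundary points, and prove that lemma by Morse theory. (I read the flow as moving $\bd K$ inward with unit speed, i.e.\ as the uniform flow of Section~\ref{sec:intro}; for the outward flow the identical computation gives that $n(K(t))$ is strictly \emph{decreasing}.) First I would identify $K(t)$: since $K$ is convex and each boundary point travels a distance $t$ along its inner normal, $K(t)=K\ominus t\B=\{x:\ x+t\B\subseteq K\}$ for $t\in[0,T)$, so $K=K(t)+t\B$. Parallel hypersurfaces share the same evolute, hence the centres of curvature of $K(t)$ coincide with those of $K$; for $t<T$ the body $K(t)$ is therefore again $C^2$ and contains all of them (this is what $T$ records), and $K(t)$ is a Euclidean ball iff $K$ is. If $K$ is a ball of radius $R$, then $K(t)$ is the ball of radius $R-t$ and $n(K(t))=2$ by Theorem~\ref{thm:lowerbound}; so from now on assume $K$ is not a ball.

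Second, the flow does not change the normal count. Translating a line along its own direction does not move it, so the family of normal lines of $K(t)=K\ominus t\B$ equals that of $K$. As every point of $K(t)$ lies in $\inter K$, for $p\in K(t)$ the normals of $K(t)$ through $p$ are exactly the normals of $K$ through $p$, so $n(K(t),p)=n(K,p)$; in particular $I(K(t))=\int_{K(t)}n(K,p)\,\dif p$, i.e.\ $n(K(t))$ is the mean of the \emph{fixed} function $n(K,\cdot)$ over the shrinking body $K(t)$.

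The crux is the lemma: \emph{if $L$ is a convex body with $C^2$ boundary containing all its centres of curvature, then $n(L,q)=2$ for every $q\in\bd L$.} Indeed $n(L,q)$ equals the number of critical points of $x\mapsto\tfrac12|x-q|^2$ on $\bd L$. A critical point $x$ satisfies $q=x+\lambda N(x)$, and since $q\in\bd L$ lies on the inner normal ray from $x$ — which meets $\bd L$ only at $x$ and at the far endpoint $b(x):=x+L(x)N(x)$ of the normal chord at $x$ — one has $x=q$ or $b(x)=q$. At $x=q$ the Hessian is the identity (a nondegenerate minimum); at a critical point with $b(x)=q$ the Hessian has eigenvalues $1-L(x)\kappa_i(x)$, $i=1,\dots,m-1$, and $L(x)\kappa_i(x)\ge 1$ for every $i$ \emph{precisely because} the centre of curvature $x+\rho_i(x)N(x)$ lies on $[x,b(x)]\subseteq L$, i.e.\ $\rho_i(x)=1/\kappa_i(x)\le L(x)$ — so $x$ is a local maximum (index $m-1$). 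Thus $\tfrac12|\cdot-q|^2$ is a Morse function on $\bd L\cong\Sph^{m-1}$ with one minimum and only maxima otherwise; since the alternating sum of indices equals $\chi(\Sph^{m-1})$ there is exactly one maximum, whence $n(L,q)=2$.

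To finish, differentiate $I(K(t))=\int_{K(t)}n(K,p)\,\dif p$: as $\bd K(t)$ moves inward at unit speed,
\[
\frac{\dif}{\dif t}\,I(K(t))=-\!\int_{\bd K(t)}\! n(K(t),q)\,\dif\sigma(q)=-2\,\surf(K(t))=2\,\frac{\dif}{\dif t}\,\vol(K(t)),
\]
the middle equality by the lemma applied to the (valid) body $K(t)$. Hence $I(K(t))-2\vol(K(t))\equiv\bigl(n(K)-2\bigr)\vol(K)$, so $n(K(t))=2+\bigl(n(K)-2\bigr)\vol(K)/\vol(K(t))$; as $\vol(K(t))$ strictly decreases and $n(K)>2$ by Theorem~\ref{thm:lowerbound}, $n(K(t))$ is strictly increasing, while $n(K(t))\equiv 2$ if $K$ is a ball. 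I expect the lemma to be the main obstacle — it is the genuinely new ingredient and the place where the hypothesis is used, through the sign of $1-L(x)\kappa_i(x)$; the only fine points are that a $C^2$ (rather than smooth) boundary calls for an approximation argument before invoking Morse theory, and that a centre of curvature lying \emph{on} $\bd L$ produces a degenerate critical point, which however occurs only for $q$ in a null subset of $\bd L$ and so does not affect the integral $\int_{\bd K(t)}n(K(t),q)\,\dif\sigma(q)=2\,\surf(K(t))$.
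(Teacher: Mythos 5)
Your proposal runs on the same mechanism as the paper's own proof: the normal lines are unchanged under the parallel deformation, every point swept by the moving boundary sees exactly two normals, and therefore the average $n(K(t))$ is a monotone mixture of the value $2$ with the fixed integral $I(K)$. The substantive difference is the direction of the flow. The paper's proof takes the printed statement literally (outer normal), works with the outer parallel bodies $K(t)\supseteq K$, observes that the centres of curvature do not move, deduces $n(K(t),p)=2$ on $K(t)\setminus K$, and concludes that $n(K(t))$ is strictly \emph{decreasing} --- which contradicts the ``strictly increasing'' in the statement; the paper is internally inconsistent here, and your inward reading (matching the ``uniform flow'' of the introduction and the stated conclusion) is the natural repair, with your closing remark correctly identifying the outward case as the decreasing one. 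Beyond that, your proof is more detailed than the paper's at the crucial point: the paper simply asserts that a body containing all its centres of curvature satisfies $n(K,p)=2$ for every $p\in\bd K$, whereas you prove this by the Morse/Poincar\'e--Hopf index count (one nondegenerate minimum at $q$, all other critical points of index $m-1$ because $\rho_i(x)\le L(x)$, hence exactly one maximum), and your exact relation $n(K(t))=2+\bigl(n(K)-2\bigr)\vol(K)/\vol(K(t))$ is a sharper, quantitative form of the paper's qualitative averaging argument.

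Two points need attention in your version. First, persistence of the hypothesis: in the outward reading this is trivial (the evolute $E$ lies in $K\subseteq K(t)$), but in your inward reading $E\subseteq K$ does \emph{not} imply $E\subseteq K(t)=K\ominus t\B$; the hypothesis even permits a centre of curvature to lie on $\bd K$ (i.e.\ $\rho_i(x)=L(x)$), in which case no inner parallel body contains all its centres of curvature. Your parenthetical ``this is what $T$ records'' therefore imports an assumption that is not in the statement; to be honest the argument should either assume $E\subseteq\inter K(t)$ for $t<T$ (equivalently restrict $T$ accordingly) or prove that this containment persists, which is exactly the step your lemma is applied to at time $t$. Second, your dismissal of the degenerate case needs an argument: the bad reference points are those of $\bd L\cap E$, and showing that this set has zero surface measure (so that $\int_{\bd K(t)}n(K(t),q)\,\dif\sigma=2\,\surf(K(t))$ still holds) does not follow from the evolute having zero $m$-dimensional measure; note these are precisely the configurations that also threaten the persistence issue above. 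Neither caveat touches the outward version, which is what the paper actually proves.
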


\begin{proof}
Since $K$ contains all its centres of curvature, we have that $n(K,p) = 2$ for any $p \in \bd K$.
On the other hand, it is known \cite{Spivak} that for the above eikonal equation the centres of curvature of $K(t)$ 
do not change. Thus, for any $t \geq 0$, $K(t)$ contains all its centres of curvature, which yields that $n(K(t),p) = 2$ holds
for any $p \in K(t) \setminus K$.

Clearly, if $K$ is not a ball, $n(K) > 2$, from which it follows that $n(K(t)) < n(K)$ for any $t > 0$.
By the same argument, we obtain that for any $t_2 > t_1 \geq 0$, we have $n(K(t_2)) < n(K(t_1))$.
If $K$ is a Euclidean ball, then $K(t)$ is also a Euclidean ball for any $t \geq 0$, which readily implies the assertion in the second case.
\end{proof}

\subsection{Planar results}

Let us remark that, as the results of Hann \cite{Hann93} and Hug \cite{Hug95} show, the Lebesgue measure of the points $p \in \inter K$ with $n(K,p)=\infty$ is
zero. Nevertheless, our next example shows that, even in the smooth case, the condition $n(K,p)=\infty$ cannot be replaced with $n(K,p) \geq L$ for any given $L \in \Re$, depending only on $K$.

\begin{thm}\label{thm:example}
There is a plane convex body $K$, with $C^3$-class boundary, such that for any $L \in \Re$, the set of points $p \in \inter K$, satisfying the condition
$n(K,p) \geq L$, is of positive measure.
\end{thm}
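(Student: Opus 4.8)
The plan is to build $K$ so that its boundary contains a sequence of circular arcs whose radii are carefully chosen, and to exploit the fact that the center of such an arc is a point through which infinitely many (more precisely, a whole arc's worth, hence as many normals as we like when counted with the discrete approximations) normals pass, while nearby centers still receive many normals. Concretely, I would start from a base convex body, say a disc or an ellipse-like curve, and modify it on a sequence of disjoint boundary windows $W_1, W_2, \dots$ shrinking toward a boundary point. On the window $W_k$ I would replace the boundary by a circular arc $A_k$ of radius $r_k$; the center $c_k$ of $A_k$ then lies on every normal emanating from $A_k$, so $n(K,c_k)=\infty$. The subtlety is that this alone only gives countably many bad points, whereas we want a positive-measure set for each finite threshold $L$. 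The resolution is that for a point $p$ near $c_k$ (but not equal to it), the normals through $A_k$ no longer all pass through $p$, but a definite count of them still does, and by choosing the arcs long enough and the geometry generic enough, each such $p$ also collects normals from a fixed positive number of the \emph{other} arcs $A_j$. Thus I would show that for a suitable neighborhood $V_k$ of $c_k$ every point of $V_k$ has at least (roughly) $k$ normals, so $\bigcup_{j\ge L} V_j$ is an open, hence positive-measure, set on which $n(K,p)\ge L$.

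The key steps, in order, are: (i) fix a $C^3$ strictly convex base curve and a boundary point $x_0$; choose disjoint parameter windows $W_k\to x_0$ and, on each, interpolate between the base curve and a circular arc $A_k$ of radius $r_k$ using a $C^3$ bump so that the modified curve remains convex and $C^3$ — here one needs $r_k$ close to the base radius of curvature on $W_k$ and the windows small, which is where the interpolation estimates live; (ii) verify convexity of the global curve, which follows because each local modification keeps the curvature positive and the modifications have disjoint supports; (iii) observe $n(K,c_k)=\infty$ since the center $c_k$ of $A_k$ lies on the inner normal at every point of $A_k$; (iv) the stability step: show there is $\varepsilon_k>0$ such that for every $p\in B(c_k,\varepsilon_k)$, the number of normals of $K$ through $p$ that emanate from $A_k$ alone is at least some $N_k\to\infty$ — this uses that the function $q\mapsto \langle q - p, N(q)\rangle$ on $A_k$, whose zeros are the feet of normals through $p$, is a small perturbation (when $p$ is near $c_k$) of a function that vanishes identically on $A_k$, so it has many sign changes provided the arc is long and $\varepsilon_k$ small; (v) assemble: given $L$, the set $\bigcup_{k\ge L'}B(c_k,\varepsilon_k)$ with $N_{L'}\ge L$ is open and nonempty, hence of positive measure, and every point in it has $n(K,p)\ge L$.

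The main obstacle is step (iv), the quantitative stability of the normal count near an arc center. For $p=c_k$ exactly, the relevant function vanishes on all of $A_k$, so ``infinitely many zeros'' is a degenerate statement; perturbing $p$ off $c_k$ destroys this, and one must control \emph{how many} transversal zeros survive. The honest way to do this is to make the arcs $A_k$ subtend a fixed angular length $\alpha>0$ independent of $k$ (only their radii $r_k$ shrink, or stay comparable, while the windows $W_k$ in arclength shrink — these are reconciled by letting $r_k$ be bounded below), and then to note that for $p$ at distance $\delta$ from $c_k$, the function $q\mapsto\langle q-p,N(q)\rangle$ restricted to $A_k$ equals $-r_k - \delta\cos(\theta-\theta_0)$ up to the tiny $C^3$-interpolation error, wait — more care is needed, since on a genuine circular arc every radial line is already normal, so in fact $\langle q - c_k, N(q)\rangle \equiv -r_k$ and moving $p$ introduces a term $-\langle p-c_k, N(q)\rangle$ which oscillates and produces new zeros only where it beats $r_k$; thus to get \emph{many} normals through nearby $p$ one should instead perturb the arc slightly to a curve of slowly-varying curvature (a ``wiggled arc'') so that $\langle q-c_k,N(q)\rangle$ itself already has many zeros, and these persist under small moves of $p$. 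I would therefore replace ``circular arc'' above by ``arc on which the curve oscillates around a circle with amplitude $o(r_k)$ and $k$ full periods'', which is still realizable in $C^3$; then each of the $k$ near-tangencies of the evolute with $c_k$ produces a normal through $c_k$, these are transversal, and hence stable, giving $N_k\ge k$ on a genuine ball $B(c_k,\varepsilon_k)$. This is the delicate part and the place where the $C^3$ (rather than merely $C^2$) hypothesis is used, to keep the curvature — and hence the evolute — under control.
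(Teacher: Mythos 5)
Your final, self-corrected plan is viable, but it is genuinely different from the paper's argument. The paper keeps the arcs literally circular: it takes arcs joining $p_k=(\sin\alpha_k,\cos\alpha_k)$ to $p_{k+1}$ with radii $r_k=1+\sin^2\alpha_{k+1}\to 1$ and centres $c_k\to o$, so the windows \emph{and} their centres accumulate, and every point of a suitable neighbourhood of the single interior point $o$ picks up one normal from each of the first $L$ arcs; the sets of positive measure for different $L$ are nested neighbourhoods of one fixed point, and the analytic work goes into $C^3$-regularity at the accumulation point $x=0$ and Ghomi-type smoothing at the arc junctions while keeping two thirds of each arc. You instead make each window individually responsible for many normals: curvature oscillating $k$ times about a circle of radius $r_k$ gives the distance function from $c_k$ roughly $2k$ nondegenerate critical points on that window, and nondegeneracy makes them persist for all $p$ in a small ball $B(c_k,\varepsilon_k)$, so for a given $L$ one ball already supplies the positive-measure set. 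This decouples the windows (only disjointness and $C^3$-smallness of the perturbations near their accumulation point are needed) and trades the paper's global convergence bookkeeping for a local Morse-stability argument; your own observation that a genuinely circular arc sends at most a couple of normals through any $p\neq c_k$ is exactly why the switch to wiggled arcs is necessary, and the first version of your step (iv), including the claim that nearby points also collect normals from the \emph{other} arcs, should simply be discarded. Two repairs are still needed: the feet of normals through $p$ are the zeros of $q\mapsto\langle q-p,T(q)\rangle$ (equivalently, critical points of $q\mapsto|q-p|$), not of $\langle q-p,N(q)\rangle$, and the persistence step should be phrased for the former (your ``evolute near-tangency'' count is this in disguise); and the parenthetical reconciliation ``fixed angular length, radii bounded below, windows shrinking in arclength'' is internally inconsistent and also unnecessary --- in the oscillating version you shrink the wavelength and take the amplitude $a_k$ so small that the curvature stays positive and the perturbations tend to $0$ in $C^3$, accepting a smaller but still positive $\varepsilon_k$. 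With those fixes, and the same kind of convexity-preserving $C^3$ interpolation that the paper itself only sketches, your argument goes through.
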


\begin{proof}
Consider the semi-circle $y=\sqrt{1-x^2}$ in a Cartesian coordinate system. Let $\alpha_k = \frac{\pi}{2^k}$, where $k=1,2,\ldots$, and
$p_k= (\sin\alpha_k,\cos\alpha_k)$. Let $p=(0,1)$. For every value of $k$, connect the points $p_k$ and $p_{k+1}$ with a circle arc of radius $r_k=1+\sin^2 \alpha_{k+1}$, and let $f$ denote the function defined in this way.
Then the centre of the $k$th circle is $c_k=(u_k,v_k) = (-\sin^3 \alpha_k, -\sin^2 \alpha_k \cos \alpha_k)$.
Observe that $r_k >1$ for every $k$, and that the limit of the sequence $\{ r_k \}$ is $1$.
Clearly, $f$ is $C^\infty$ at every point, apart from the endpoints of the circle arcs, and possibly $x=0$.

It can be shown that $f$ is three times continuously differentiable at $x=0$.
Indeed, the fact that it is continuously differentiable follows from the geometric meaning of derivative.
To show that it is twice continuously differentiable,
we can use the definition of derivative. Finally, one can check that the curvature of $f$ is continuously differentiable at $x=0$, which yields the required statement.

As a last step, we can use the method described in \cite{Ghomi} to smoothen $f$ at the endpoints of the circle arcs, while preserving concavity of $f$ and its differentiability properties at $x=0$ in such a way that the curve $F$ obtained in this way contains at least two third of each circle arc.
Let us extend $F$ to be the boundary of a plane convex body $K$ with $C^3$-class boundary.
Then for every positive integer $L$, the origin $o$ has a neighborhood within which every point belongs to at least $L$ normals, each starting at a circle arc in $\bd K$.
\end{proof}

Our next result can be interpreted as a second indication, after the result in \cite{D98}, that ``in general'', $n(K)$ is greater for polytopes than for smooth convex bodies. Here, Theorem~\ref{thm:example} suggests that the conditions in Theorem~\ref{thm:discretization} are not only technical.

\begin{thm}\label{thm:discretization}
Let $K$ be a plane convex body, with $C^3$-differentiable boundary. Parametrize $\bd K$ as the curve $s \mapsto r(s)$, using arc-length parametrization,
where $0 \leq s \leq l=\perim K$. Assume that there is some $L \in \Re$ such that the measure of the points $p \in \inter K$ with $n(K,p) \geq L$ is zero.
For any integer $k \geq 3$, let $P_k = \conv \left\{ r\left( \frac{j}{k} l \right) : j=0,1,\ldots,k-1 \right\}$.
Then there is some $k_0 \in \Re$ such that for every $k \geq k_0$, we have $n(P_k) > n(K)$.
\end{thm}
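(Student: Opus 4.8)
The plan is to evaluate both $I(K)$ and $I(P_k)$ by the area (change of variables) formula of Federer applied to the normal map $(q,\lambda)\mapsto q+\lambda\nu(q)$, exactly as in the proof of Theorem~\ref{thm:constantwidth}, and then to check that $\lim_k I(P_k)$ strictly exceeds $I(K)$ while $\vol(P_k)\to\vol(K)$. Rephrasing the computation of Theorem~\ref{thm:constantwidth} in arc-length coordinates: writing $\kappa(s)\ge 0$ for the curvature of $\bd K$ at $r(s)$, $\nu(s)$ for the inner unit normal, and $L(s)$ for the length of the chord of $K$ issuing from $r(s)$ in direction $\nu(s)$, the map $(s,\lambda)\mapsto r(s)+\lambda\nu(s)$, $0\le\lambda\le L(s)$, has Jacobian $|1-\lambda\kappa(s)|$ and covers $K$ with multiplicity $n(K,\cdot)$ almost everywhere, so
\[
I(K)=\int_0^l\int_0^{L(s)}|1-\lambda\kappa(s)|\,\dif\lambda\,\dif s=\int_0^l g(s)\,\dif s,\qquad g(s)=\begin{cases}L(s)-\tfrac12\kappa(s)L(s)^2,& \kappa(s)L(s)\le 1,\\ \tfrac1{\kappa(s)}+\tfrac12\kappa(s)L(s)^2-L(s),&\kappa(s)L(s)>1,\end{cases}
\]
the two cases arising from the sign of $1-\lambda\kappa(s)$ on $[0,L(s)]$.

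Applying the same device to the polygon splits $I(P_k)$ into the contributions of the relative interiors of the edges and of the vertices:
\[
I(P_k)=\sum_{\text{edges }e}\int_e w_e(x)\,\dif x+\frac12\sum_{\text{vertices }v}\int_0^{\beta_v}\rho_v(\theta)^2\,\dif\theta,
\]
where $w_e(x)$ is the length of the segment of $P_k$ issuing from $x\in e$ perpendicularly to $e$, $\beta_v$ is the exterior angle at $v$ (so $\sum_v\beta_v=2\pi$), and $\rho_v(\theta)$ is the distance from $v$ to $\bd P_k$ along the $\theta$-th ray of the normal cone at $v$; here the normal map is injective on each piece, with Jacobian $1$ over an edge and $t$ over a vertex. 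Since $\bd K$ is $C^1$, a chord issuing along the inner normal cannot be tangent to $\bd K$ at its far endpoint --- otherwise the line carrying it would be a supporting line of $K$ meeting $\inter K$ --- so that exit is transversal and $L(\cdot)$ is continuous, hence bounded, on $\bd K$. Using $P_k\to K$ in the Hausdorff metric, that the edge lengths and the angles $\beta_v$ tend to $0$ uniformly, and that a secant of $\bd K$ of length $O(1/k)$ deviates from $\bd K$ by $O(1/k^2)$, one checks that $w_e(x)\to L$ and $\rho_v(\theta)\to L$ uniformly near the corresponding boundary point; reading the two sums above as Riemann sums over the arcs of $\bd K$ cut off by consecutive vertices of $P_k$ then gives
\[
I(P_k)\ \longrightarrow\ \widetilde I:=\int_0^l L(s)\,\dif s+\frac12\int_0^l\kappa(s)L(s)^2\,\dif s,\qquad\text{while}\qquad\vol(P_k)\to\vol(K).
\]

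It remains to show $\widetilde I>I(K)$. Subtracting integrands,
\[
\Big(L+\tfrac12\kappa L^2\Big)-g=\begin{cases}\kappa L^2,&\kappa L\le 1,\\ 2L-\tfrac1\kappa,&\kappa L>1,\end{cases}
\]
which is $\ge 0$ everywhere (in the second case $2L-1/\kappa>L>0$ since $\kappa L>1$) and is $>0$ wherever $\kappa(s)>0$; as $\int_0^l\kappa(s)\,\dif s=2\pi$, the set $\{\kappa>0\}$ has positive measure, whence $\widetilde I-I(K)>0$. Finally, the hypothesis yields $n(K,p)<L$ for almost every $p$, so $I(K)\le L\,\vol(K)<\infty$ and $n(K)=I(K)/\vol(K)$ is finite; consequently $n(P_k)\to\widetilde I/\vol(K)>I(K)/\vol(K)=n(K)$, and the assertion follows with any suitable $k_0$.

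The step I expect to be the main obstacle is the passage to the limit $I(P_k)\to\widetilde I$: making the approximations $w_e\approx L\approx\rho_v$ genuinely uniform (this is where the transversality of the chord exits and the $O(1/k^2)$ secant deviation enter) so that the edge and vertex sums converge to the stated integrals. The conceptually essential --- and perhaps surprising --- point, which is exactly why the inscribed polygon overtakes $K$, is that the vertex contributions do not vanish in the limit: although each of the $\sim k$ normal fans subtends only an angle $\beta_v=O(1/k)$, they together assemble into the strictly positive surplus $\tfrac12\int_0^l\kappa(s)L(s)^2\,\dif s$.
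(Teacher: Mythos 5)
Your proposal is correct in substance, but it takes a genuinely different route from the paper, so let me compare. The paper argues pointwise through equilibria: it writes $n(K,p)=2u(K,p)$ with $u$ the number of stable points of the distance function, chooses a uniform $\varepsilon$ so that away from the evolute each stable point of $K$ with respect to $p$ forces a stable point of $P_k$ on a nearby edge (hence $u(P_k,p)\ge u(K,p)$ there), invokes Theorem~1 of \cite{DLSZ12} to obtain a fixed surplus $u(P_k,p)\ge u(K,p)+50$ on a zone of fixed positive area near the evolute (nonempty because, by Blaschke's rolling theorem, the evolute meets $\inter K$), and uses the essential-boundedness hypothesis to control the part of the integral lost over $K\setminus P_k$. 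You instead compute both quantities exactly by the area formula and identify the limit
\[
n(P_k)\ \longrightarrow\ \frac{1}{\area(K)}\Bigl(\int_0^l L(s)\,\dif s+\frac{1}{2}\int_0^l\kappa(s)L(s)^2\,\dif s\Bigr),
\]
and your two-case formula for $g$, the edge/vertex decomposition of $I(P_k)$, and the pointwise comparison of integrands (strictly positive wherever $\kappa>0$, a set of positive measure since $\int_0^l\kappa=2\pi$) are all correct; the disk, where the limit gives $8$ against $n(K)=2$, is a reassuring check. What your route buys is an explicit limiting value and a transparent localization of the surplus in the vertex normal cones; what it costs is precisely the step you flagged: you must prove the uniform estimates $w_e(x)=L(s)+o(1)$ and $\rho_v(\theta)=L(s)+o(1)$, which needs (i) a uniform positive lower bound on the angle at which the normal chord of $K$ meets $\bd K$ at its far endpoint (your tangency argument, plus continuity of $L$ and compactness of $[0,l]$), (ii) an argument excluding an ``early exit'' of the ray near its starting edge or vertex (e.g.\ via concavity of the distance-to-boundary function along a chord, or a fixed disk contained in all $P_k$ for large $k$), and (iii) the identification $\sum_v\beta_v f(s_v)\to\int_0^l\kappa f$ through the mean value property of turning angles; all standard, but it has to be written out, whereas the paper's one-sided equilibrium comparison avoids this quantitative work. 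Finally, note that your argument never genuinely uses the hypothesis that $n(K,\cdot)$ is essentially bounded: finiteness of $I(K)$ already follows from the area formula, and the contribution of $K\setminus P_k$ disappears automatically in your scheme. Since the paper explicitly suggests (via Theorem~\ref{thm:example}) that this hypothesis is not merely technical, you should say explicitly that, once the limit step is made rigorous, your proof dispenses with it; in the paper's own argument the hypothesis only serves to bound $\int_{K\setminus P_k}u(K,p)\,\dif p$, which absolute continuity of the integral of the (integrable) function $u(K,\cdot)$ would also provide.
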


\begin{proof}
For any $0 \leq s \leq l$, let $L_s$ and $c(s)$ denote, respectively, the normal line and the centre of curvature of $\bd K$ at $r(s)$.
Let $p \in L_s \cap \inter K$. It is an elementary computation to show that the second derivative of the Euclidean distance function $s \mapsto |r(s)-p|$
is positive, zero or negative if, and only if $p \in \relint [c(s),r(s)]$, $p=c(s)$ or $p \notin [c(s),r(s)]$, respectively.
Note that in the first case this function has a local minimum, in the third one a local maximum, and in the second $p$ belongs to the \emph{evolute} $E$
(or in other words, \emph{caustic}) of $\bd K$.
Following the terminology of dynamical systems, we say that in the first case $K$ has a \emph{stable equilibrium point} at $r(s)$, with respect to $p$,
and in the third one that it has an \emph{unstable equilibrium}.

Clearly, for any point $p \notin E$, the distance function has no degenerate critical point on $\bd K$, and thus, the number of stable and unstable
points, with respect to $p$, are equal to $\frac{n(K,p)}{2}$, which we denote by $u(K,p)$.
On the other hand, we can see from (\ref{eq:atteres}) that $E$ has zero measure.
Thus, we have $I(K) = 2 \int_{p \in K \setminus E} u(K,p) \dif p$, and it suffices to prove the assertion for the average number of stable points.

Let $\bd K$ have a stable equilibrium at $r(s_0)$ with respect to some $p \in \inter K$.
Then $p \in \relint [c(s_0),r(s_0)]$, and there is some $\varepsilon > 0$ such that on $[s_0-\varepsilon,s_0]$, the distance function of $\bd K$,
measured from $p$, is strictly decreasing, and on $[s_0,s_0 + \varepsilon]$ it is strictly increasing.
Examining the sign of the inner product $\langle r(s) - q, \dot{r}(s) \rangle$, it can be seen that with respect to any point $q \in \relint [p,r(s)]$,
the distance function has the same property, with the \emph{same} value of $\varepsilon$.

For $s \in [0,l]$, let $\kappa(s)\leq 0$ denote the curvature of $\bd K$ at $r(s)$, and observe that $|c(s)-r(s)| \kappa(s) = 1$.
Set
\[
A = \{ p \in \inter K: p \in (c(s),r(s)] \hbox{ and } 0.99 \leq |p-r(s)| \kappa(s) \leq 1 \hbox{ for some } s \in [0,l] \} \]
and
\[
B = (\inter K) \setminus A .
\]

First, we show that there is some $k_0$ such that for any $k \geq k_0$, and, for almost all $p \in B$,
$u(K,p) \leq u(P_k,p)$.
For any $s \in [0,l]$, let $\varepsilon(s)>0$ be the largest value such that for the point $p \in \relint [c(s),r(s)]$ with $|p-r(s)| \kappa(s) = 0.99$,
the function $t \mapsto |r(t) - p|$ is strictly decreasing on $[s-\varepsilon,s]$ and strictly increasing on $[s,s+\varepsilon]$.
Note that, by the argument in the third paragraph of the proof, the same property holds for any point $p \in \relint [c(s),r(s)]$ with
$|p-r(s)| \kappa(s) \leq 0.99$
Since $\varepsilon(s)$ depends continuously on $s$, there is some universal value $\varepsilon > 0$ such that
for any $s \in [0,l]$ and $p \in [c(s),r(s)]$ with $|p-r(s)| \kappa(s) \leq 0.99$, $|p-r(t)|$ is strictly decreasing on $[s-\varepsilon,s]$,
and strictly increasing on $[s,s+ \varepsilon]$.
Thus, as $\bd K$ is arc-length parametrized, for any $k \geq k_1=\frac{l}{\varepsilon}$ and point $p \in B$, there is a side of $P_k$ in the $\varepsilon$-neighborhood of any stable point of $K$ with respect to $p$.
Observe that if $p \in [c(s),r(s)]$ and $r(s)$ is not a vertex of $P_k$, then this side of $P_k$ contains a stable point with respect to $p$.
From this, it readily follows that, apart from the points of the normals of $K$ at the vertices of $P_k$, $u(K,p) \leq u(P_k,p)$ for any
$p \in B$.

Now, let $p \in A$. Then, for some $s \in [0,l]$, we have $p \in \relint [c(s),r(s)]$ and $0 \leq 1-|p-r(s)| \kappa(s) \leq 0.01$.
Using the idea of the proof of Theorem 1 in \cite{DLSZ12}, we obtain that there is a (universal) value $k_2$ such that
for any $k \geq k_2$ and any such point $p \in B$, we have $u(K,p) + 50 \leq u(P_k,p)$.

Finally, let $T_k = \area(K \setminus P_k)$.
By Blachke's Rolling Ball Theorem, $K \cap E \neq \emptyset$, and thus, $B$ is not empty.
Let $\area(B) = 2T>0$. We may assume that $\area(B \cap P_k) \geq T$, since it holds for sufficiently large values of $k$.
Then, for any $k \geq k_2$, we have that
\[
\int_{P_k} s(P_k,p) \dif p \geq \int_{A \cap P_k} u(K,p) \dif p + \int_{B \cap P_k} u(K,p)+50 \dif p =
\]
\[
= \int_{P_k} u(K,p) \dif p + 50 \area(B \cap P_k) \geq \int_K u(K,p) \dif A - L T_k + 50 T.
\]
Since $\lim_{k \to \infty} T_k = 0$, we have that for sufficiently large values of $k$, $\int_{P_k} s(P_k,p) \dif p > \int_K u(K,p) \dif p$.
As $\area(P_k) < \area(K)$, the assertion follows.
\end{proof}

In the remaining part we deal with the normed version of the original problem.

\begin{thm}\label{thm:normedplanes}
Let $K$ be a $C^2$-class, strictly convex body of constant width in a normed plane with unit disk $M$.
Then we have
\[
n_M(K) \leq \frac{6}{3-2 \tau(M)},
\]
where $\tau(M)$ is the ratio of the area of a largest area affine regular hexagon inscribed in $M$, to $\area (M)$, and this estimate cannot be improved.
\end{thm}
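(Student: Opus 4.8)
The plan is to bound the numerator and the denominator of $n_M(K)=I_M(K)/\vol(K)$ separately against the difference body $K-K$. The two structural facts I would set up first are: (i) since $K$ has constant $M$-width, $K-K=\lambda M$ for some $\lambda>0$, so $\mathcal{H}(K-K)=\tau(M)\,\vol(K-K)$, where $\mathcal{H}(L)$ denotes the maximal area of an affine-regular hexagon inscribed in a centrally symmetric $L$; and (ii) (a Minkowski analogue of Chakerian's remark) the $M$-normals of such a $K$ are exactly its affine diameters (Definition~\ref{defn:diameter}), so the $M$-normal ray from almost every $q\in\bd K$ ends at the opposite endpoint $\sigma(q)$ of the affine diameter through $q$, the tangent lines of $\bd K$ at $q$ and $\sigma(q)$ are parallel, and $s\mapsto\sigma(q(s))-q(s)$ runs once over $\bd(K-K)$.

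First I would show $I_M(K)\le\vol(K-K)$. Parametrize $\bd K$ by Euclidean arc length, $s\mapsto q(s)$, $0\le s\le\perim(K)$, and set $\Phi(s,t)=q(s)+t\bigl(\sigma(q(s))-q(s)\bigr)$ on $[0,\perim(K)]\times[0,1]$; its image is $K$. Each affine diameter through an interior point $p$ contributes exactly two points to $\Phi^{-1}(p)$ and two to the count $n_M(K,p)$ (one per endpoint), so $\#\Phi^{-1}(p)=n_M(K,p)$ for almost all $p$, and by the area formula \cite{F69}, $I_M(K)=\int_0^{\perim(K)}\!\int_0^1|J\Phi(s,t)|\,dt\,ds$. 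Writing $\tfrac{d}{ds}\sigma(q(s))=-\mu(s)\dot q(s)$ with $\mu(s)\ge0$ (parallel tangent lines, opposite orientation), one gets $J\Phi(s,t)=\bigl(1-t(1+\mu(s))\bigr)\det[\dot q(s),\sigma(q(s))-q(s)]$ with the determinant positive, and $\int_0^1|1-t(1+\mu)|\,dt=\tfrac{1+\mu^2}{2(1+\mu)}\le\tfrac{1+\mu}{2}$. Hence
\[
I_M(K)\le\frac12\oint\bigl(1+\mu(s)\bigr)\,\det[\dot q(s),\sigma(q(s))-q(s)]\,ds=\vol(K-K),
\]
the last equality being Green's theorem for the closed curve $s\mapsto\sigma(q(s))-q(s)$, which sweeps $\bd(K-K)$ once. (Where the Euclidean curvature of $\bd K$ vanishes $\mu$ may be unbounded; the integrals still converge and the estimate is unaffected, but to be safe one can first approximate $K$ by constant-$M$-width bodies of positive curvature, as in Hug's scheme \cite{Hug95}, and pass to the limit.)

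For the denominator I would use the identity $\vol(K-K)=2\vol(K)+2V(K,-K)$, valid for any plane convex body, which turns the desired bound $\vol(K)\ge\tfrac{3-2\tau(M)}{6}\vol(K-K)$ into the mixed-area inequality $V(K,-K)\le\tfrac13\mathcal{H}(K-K)$. I expect this to be the main obstacle: it is a Minkowski Blaschke–Lebesgue-type extremal statement (for $M$ a disk it is exactly the Blaschke–Lebesgue theorem, already delicate), whose extremizers should be the Minkowski Reuleaux triangles built over a maximal inscribed affine-regular hexagon of $M$. One route is to pass to support functions, exploit the constant-width constraint to reduce to an isoperimetric-type inequality, and pin down the equality case; another is to invoke the known description of minimal-area constant-width bodies in normed planes. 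Granting it, and combining with the first step and $\mathcal{H}(K-K)=\tau(M)\vol(K-K)$,
\[
n_M(K)=\frac{I_M(K)}{\vol(K)}\le\frac{\vol(K-K)}{\frac{3-2\tau(M)}{6}\,\vol(K-K)}=\frac{6}{3-2\tau(M)}.
\]

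Finally, to see the bound cannot be improved I would exhibit $C^2$ bodies $K_i$ of constant $M$-width with $n_M(K_i)\to\frac{6}{3-2\tau(M)}$. Starting from a maximal-area affine-regular hexagon inscribed in $M$, the directions of its three main diagonals determine a Minkowski Reuleaux triangle $K_0$ of constant $M$-width that is extremal in the Blaschke–Lebesgue bound, i.e.\ $\vol(K_0)=\tfrac{3-2\tau(M)}{6}\vol(K_0-K_0)$. Smoothing the three corners of $K_0$ while preserving constant $M$-width (in the spirit of \cite{Ghomi}, cf.\ the proof of Theorem~\ref{thm:example}) gives $C^2$ strictly convex bodies $K_i\to K_0$ for which $\vol(K_i)/\vol(K_i-K_i)\to\tfrac{3-2\tau(M)}{6}$, while the pointwise loss $\tfrac{\mu}{1+\mu}\det[\dot q,\sigma-q]$ incurred in the first step is supported on the shrinking highly-curved arcs replacing the corners and has vanishing total mass, so $I_M(K_i)/\vol(K_i-K_i)\to1$. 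Hence $n_M(K_i)\to\frac{6}{3-2\tau(M)}$, which closes the argument.
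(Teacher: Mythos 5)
Your proposal is correct and takes essentially the same route as the paper: you bound $I_M(K)\le\area(K-K)=4\area(M)$ (your Jacobian/area-formula computation is just a self-contained derivation of Chakerian's formula (10), which the paper cites) and then divide by the minimal possible area of a body of constant $M$-width. The step you single out as the main obstacle, equivalently $\area(K)\ge 2\area(M)-\frac{4}{3}\area(H)$, is not open: it is exactly Chakerian's theorem on minimal-area constant-width sets in Minkowski planes \cite{C66}, which is what the paper invokes, so with that citation your argument (including the corner-smoothing sharpness construction, which parallels the paper's equally informal one) closes.
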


\begin{proof}
Since $K$ is of constant width, its central symmetrical $\frac{1}{2}(K-K)$ is a homothetic copy of $M$, and
every Minkowski normal of $K$ is an affine diameter.
Thus, for every $p \in \inter K$ we have $2 n_M(K,p) = d(K,p)$, and we may assume, without loss of generality, that $M=\frac{1}{2}(K-K)$.

Using (10) from the paper of Chakerian, we obtain
\[
I_M(K) = 2 \int_K d(K,p) \dif A = \int_0^{2\pi} \int_0^{D(\theta)} |\lambda - \rho(\theta)| \dif \lambda \dif \theta,
\]
where $D(\theta)$ is the length of the diameter of $K$ with angle $\theta$ with a horizontal line,
and $\rho(\theta)$ is the distance of the instantaneous centre of rotation of this diameter
from the corresponding endpoint.
Note that $0 \leq \rho(\theta) \leq D(\theta)$, and $\rho(\theta) + \rho(\theta + \pi) = D(\theta)$.
Thus, we have
\[
I(K) \leq \frac{1}{2} \int_0^{2\pi} D^2(\theta) \dif \theta = \area(K-K) = 4 \area(M).
\]

Observe that equality is ``approached'' if at each point, the instantaneous centre of rotation is close to one of the endpoints of the corresponding longest chord.
Thus, $I(K)$ is ``almost'' equal to $4 \area(M)$ if $K$ is ``almost'' a Reuleaux polygon in the norm of $M$.
On the other hand, Chakerian \cite{C66} proved that in the normed plane with unit ball $M$, among plane convex bodies of constant width two,
the one with minimal area is a minimal area Reuleaux triangle in the norm.
Furthermore, he showed that the area of this triangle is $\area(K) = 2 \area(M) - \frac{4}{3}\area(H)$,
where $H$ is a largest area affine regular hexagon inscribed in $M$.
From this, the assertion readily follows.
\end{proof}

We note that if $M$ is a Euclidean disk, we have $\tau(M)=\frac{3\sqrt{3}}{2\pi}$ and $\frac{6}{3-2 \tau(M)} = \frac{2\pi}{\pi-\sqrt{3}}$,
implying the estimate (1.10) in \cite{C84}.
Furthermore, since $\tau(M) \leq 1$, with equality if, and only if $M$ is an affine regular hexagon,
our bound is better than the one in \cite{Hann97} for the case that all the centres of circular curvature of $K$ are contained in $K$.

\section{Remarks and questions}\label{sec:remarks}

Table \ref{tab2} shows the presently known best upper bounds on the value of $n(K)$.
\begin{center}
\begin{table} \label{tab2}
\begin{tabular}{|l||c|c|c|c|}
\hline
 & $\K_m$ & $\M_m$ & $\C_m$ & $\W_m$ \\
\hline
\hline
$m=2$ & $12$ & $8$ & $6$ & $\frac{2\pi}{\pi-\sqrt{3}}$ \\
\hline
$m=3$ & $62$ & $26$ & $20$ & $20$ \\
\hline
$m$ & $\left( \frac{3}{2} \right)^m \binom{2m}{m} - 1$ & $3^m -1$ & $\binom{2m}{m}$ & $\binom{2m}{m}$ \\
\hline
\end{tabular}
\caption{Upper bounds of $n(K).$ Rows correspond to dimensions ($m$), columns correspond to   convex bodies ($\K_m$), $o$-symmetric convex bodies ($\M_m$), convex bodies containing all their centres of curvature ($\C_m$), and convex bodies of constant width ($\W_m$). New results in last two rows, last two columns.}
\end{table}
\end{center}
We remark that our estimate $I(K) \leq \vol(K-K)$ for any $K \in \W_m$ is ``sharp'' for any Reuleaux-polytope.
Unfortunately, unlike in the plane, the minimal volume bodies of constant width are not known; in the case $m=3$,
an 80-year-old conjecture states that these are the so-called \emph{Meissner bodies} (cf. \cite{BF34} or \cite{KW11}),
which, unlike the optimal body in the planar case, are not Reuleaux polytopes.
Thus, following the argument in \cite{C84}, it is not possible to find the exact bounds for $n(K)$ in $\W_m$.

Note that no example is known of a plane convex body $K$ satisfying $n(K) > 8$.

\begin{ques}
Prove or disprove the existence of a plane convex body $K$, with $n(K) > 8$.
\end{ques}

\begin{rem}\label{rem:Euler}
Let $P \in \K_m$ be a polytope. For any $i$-face $F$, let $U_F$ denote the $(n-i)$-wedge of $P$ corresponding to $F$. Let $\F_i$ be the family of $i$-faces of $P$.
Hann \cite{Hann93} proved that, using this notation,
\[
1 + (-1)^{n-1} = \frac{1}{\vol(P)} \sum_{i=0}^{n-1} (-1)^i \sum_{F \in \F_i} \vol(U_F).
\]
Nevertheless, this identity is a special case of the Poincar\'e-Hopf Theorem.
\end{rem}

\begin{proof}
Let $K \in \K_m$ be a convex body with $C^2$-class boundary.
Let $p \in \inter K$ be a point of the normal through the point $q \in \bd K$. Then $K$ has an equilibrium at $q$ with respect to $p$.
If the Hessian of the function $x \mapsto |x-p|$ is not zero, we say that this equilibrium is \emph{nondegenerate}.
The number of the negative eigenvalues of the Hessian is called the \emph{index} of the equilibrium.
If $K$ has only nondegenerate equilibria with respect to $p$, let $n_i(K,p)$ denote the number of the $i$-index equilibria of $K$ with respect to $p$.
According to the Poincar\'e-Hopf Theorem, for any such point $p$, we have
\[
1 + (-1)^{m-1} = \sum_{i=0}^{m-1} (-1)^i n_i(K,p).
\]
It is well-known that $K$ has a degenerate equilibrium at some $q$ with respect to $p$ if, and only if $|q-p|$ is a principal radius of curvature at $q$
cf. e.g. \cite{PS96}).
Thus, by (\ref{eq:atteres}), the set of reference points with degenerate equilibria is of measure zero.
From this, by integration we obtain
\[
1+(-1)^{m-1} = \frac{1}{\vol(K)} \sum_{i=0}^{m-1} (-1)^i \int_{K} n_i(K,p) \dif p.
\]

To show the assertion for polytopes, we may use a standard smoothing technique, and observe that as $K \to P$,
$\int_{K} n_i(K,p) \dif p \to \sum_{F \in \F_i} \vol(U_F)$.
\end{proof}

\begin{rem}\label{rem:maximalpolyhedra}
The problem of finding the polytopes $P \in \M_m$ satisfying $n(P) = 3^m -1$ seems to be significantly harder for $m > 2$ (or even for $m=3$) than for $m=2$. One can easily check, for example, that if $P_0$ is a centrally symmetric polytope, inscribed in a circle, and $S$ is a closed segment perpendicular to the plane of $P_0$, then the prism $P=P_0 + S \subset \Re^3$ satisfies $n(P) = 26$.
On the other hand, among the five primary parallelohedra (which are all zonotopes), the cubes, the hexagonal prisms and the truncated octahedra  are maximizers in $\M_3$, whereas, according to (\ref{thm:equality}.2), elongated and rhombic dodecahedra are not.
\end{rem}

As it was observed in \cite{Hug95}, the functional $n(.) : \K_m \to \Re$ is not continuous, to show this one may 
consider a Euclidean disk $\B^2 \subset \Re^2$ with $n(\B)=2$, and an inscribed centrally symmetric $2k$-gon $P_k$ with $n(P_k)=8$.
Furthermore, using ``smoothened'' polygons in this example, one can show the discontinuity of this functional even in the family of convex bodies with $C^1$-class
boundary. On the other hand, for convex bodies with $C^2$-class boundaries, $n(.)$ is clearly continuous.
The situation is different if we consider local deformations only.
If we permit only truncations of a polytope or a $C^2$-class body by a plane, $n(K)$ changes continuously.
It would be interesting to know if the same holds for any convex body and for any local deformation.
In this direction, perhaps the methods of \cite{Hug95} can be useful.

To better understand abrasion processes it seems interesting to find bounds on the following quantity.

\begin{defn}\label{defn:surfaceaverage}
Let $K \in \K_m$. We set
\[
I_{surf}(K) = \int_{\bd K} n(K,p) \dif p, \quad \hbox{and} \quad n_{surf}(K) = \frac{I_{surf}(K)}{\surf(K)},
\]
where $\surf(K)$ denotes the surface area of $K$.
\end{defn}

\begin{prob}
Find the minima and maxima of $n_{surf}(K)$ over $\K_m$, $\M_m$,  $\C_m$ and $\W_m$, if they exist.
\end{prob}

\begin{rem}
Using the notations in the proof of Theorem~\ref{thm:eikonal}, it is easy to check that for the eikonal equation given there,
we have
\[
\left. \frac{\dif n(K(t))}{\dif t} \right|_{t=0} = \frac{\surf(K)}{\vol(K)} \left( n_{surf}(K) - n(K) \right).
\]
Hence, Theorem~\ref{thm:eikonal} can be interpreted as observing that, for any $K \in \K_m$ with $C^2$-class boundary and containing all its centres of curvature, we have $n_{surf(K)} \leq n(K)$.
\end{rem}

\begin{conj}
For any $K \in \K_m$ with $C^2$-class boundary, we have $n_{surf}(K) \leq n(K)$.
\end{conj}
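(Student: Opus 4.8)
Since this is stated as an open problem, we describe only a line of attack. The natural approach runs through the outward uniform normal flow of Theorem~\ref{thm:eikonal}, i.e.\ through the family of outer parallel bodies $K(t)=K+t\B^m$, $t\ge 0$. Applying the formula of the preceding Remark not merely at $t=0$ but at every time, the conjecture for the whole family $\{K(t)\}_{t\ge 0}$ is \emph{equivalent} to the monotonicity statement that $t\mapsto n(K(t))$ is non-increasing for every convex body $K$ with $C^2$-class boundary, while the conjecture for the single body $K$ is precisely its infinitesimal version $\left.\frac{\dif n(K(t))}{\dif t}\right|_{t=0}\le 0$. The plan is therefore to prove this monotonicity \emph{without} the hypothesis, present in Theorem~\ref{thm:eikonal}, that $K$ contain all its centres of curvature; by the continuity of $n(\cdot)$ under $C^2$-deformations noted earlier, one may first reduce to the case of a smooth, strictly convex body.

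The large-$t$ range is easy. The evolute (caustic) of $\bd K$ is a fixed compact set, while the flow leaves the centres of curvature in place and only enlarges the body, so there is a finite $t^\ast$ such that $K(t)$ contains all of its centres of curvature for every $t\ge t^\ast$; for such $t$ the function $n(K(t))$ is monotone by Theorem~\ref{thm:eikonal} (and in fact tends to $2$), hence non-increasing, and is bounded by $\binom{2m}{m}$ by Theorem~\ref{thm:constantwidth}. All the difficulty is thus concentrated on the bounded interval $[0,t^\ast)$, where the body has not yet absorbed its caustic.

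On $[0,t^\ast)$ I would work with the co-area representation
\[
I(K(t))=\int_{\Sph^{m-1}}\int_0^{L_t(u)}\prod_{i=1}^{m-1}\bigl|\lambda-\rho_i(u)-t\bigr|\,\dif\lambda\,\dif u
\]
from the proof of Theorem~\ref{thm:constantwidth}, using that along the flow each principal radius at the boundary point with outer normal $u$ becomes $\rho_i(u)+t$ while the centre of curvature is unchanged. The substitution $\mu=\lambda-t$ removes the explicit $t$ from the integrand and leaves it only in the limits: the lower limit becomes $-t$ (the genuinely new ``outer-shell'' contribution, whose $t$-derivative at $0$ must return $I_{surf}(K)$, in accordance with the Remark), and the upper limit becomes $L_t(u)-t$, the length of the inner-normal chord of $K(t)$ measured from $r(u)$. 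Differentiating in $t$ and comparing with $\frac{\dif}{\dif t}\vol(K(t))=\surf(K(t))$ reduces the monotonicity to a pointwise-in-$u$ inequality. The shift $\rho_i\mapsto\rho_i+t$ works in our favour, since it drives each factor $|\mu-\rho_i|$ towards being of constant sign on the relevant range of integration — exactly the situation in which Theorem~\ref{thm:eikonal} applies — so I expect the crux to lie in the chord-length function $(u,t)\mapsto L_t(u)$: its $t$-derivative involves the angle between the chord and the outer normal at the \emph{far} endpoint, which bears no simple relation to $u$, and one must show that the growth of $L_t$ along the flow cannot overcome the gain coming from the radii. This is also why the flow gives no free lunch — the purely infinitesimal statement at $t=0$ is logically circular with the conjecture, so some genuine control of $L_t$ is unavoidable; I expect this to be the main obstacle. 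A parallel, more static attack would compare directly the two surface integrals $I(K)=\int_{\bd K}\bigl(\int_0^{L(q)}\prod_i|1-\lambda\kappa_i(q)|\,\dif\lambda\bigr)\,\dif\sigma(q)$ and $I_{surf}(K)=\int_{\bd K}n(K,q)\,\dif\sigma(q)$, but here too no clean pointwise inequality between the integrands can hold, and the variation of the chord length $L(q)$ along $\bd K$ is again the essential difficulty.
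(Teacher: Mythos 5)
First, note that the statement you were asked about is an open conjecture in the paper: no proof is given there, so the only question is whether your argument closes it. It does not, and you say so yourself. Your reduction is sound as far as it goes and is exactly the reformulation already contained in the paper's Remark: by the formula $\left.\frac{\dif n(K(t))}{\dif t}\right|_{t=0} = \frac{\surf(K)}{\vol(K)}\bigl(n_{surf}(K)-n(K)\bigr)$, the conjecture for $K$ \emph{is} the statement that $n(K(t))$ is non-increasing at $t=0$ under the outward uniform flow, and your large-$t$ argument (the caustic is a fixed compact set, so $K(t)$ eventually contains all its centres of curvature and Theorem~\ref{thm:eikonal} applies) only re-proves the case already covered by that theorem, where $n_{surf}(K(t))=2\le n(K(t))$ trivially. (You are right, incidentally, to read Theorem~\ref{thm:eikonal} as giving a \emph{non-increasing} $n(K(t))$ -- that is what its proof shows and what is consistent with the Remark, despite the word ``increasing'' in its statement.) Everything of substance is then pushed into the interval $[0,t^\ast)$, where your plan reduces to controlling $\frac{\partial L_t}{\partial t}$, and you concede this is circular with the conjecture itself. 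So the genuine gap is the whole content of the statement: no mechanism is offered that bounds the far-endpoint contribution, and the proposal is a program, not a proof.

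There is also a concrete bookkeeping error in the sketch. After the substitution $\mu=\lambda-t$, the $t$-derivative at $t=0$ of the lower-limit term is $\int_{\Sph^{m-1}}\prod_{i=1}^{m-1}\rho_i(u)\,\dif u=\surf(K)$, not $I_{surf}(K)$: the lower limit only accounts for the one normal of $K$ based at the near boundary point of each shell point. The remaining part of $I_{surf}(K)$ -- the normals that reach a boundary point $q$ as the \emph{far} endpoint of a chord emanating from elsewhere -- is carried by the upper-limit term through the factor $\frac{\partial L_t}{\partial t}-1$, weighted by $\prod_i|L_0(u)-\rho_i(u)|$. In other words, the quantity you identify as the ``main obstacle'' is not merely an obstacle; it is precisely where the conjectured inequality lives, and the proposed substitution does not separate it from the rest of the derivative in any useful way. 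Until you can estimate that term (for bodies that do not contain their caustic), the approach yields nothing beyond the cases already settled by Theorem~\ref{thm:eikonal} and Theorem~\ref{thm:constantwidth}.
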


We conjecture the following, more general version of Theorem~\ref{thm:eikonal}.

\begin{conj}
Let $K \subset \Re^m$ be a convex body, containing all its centres of curvature, with a $C^2$-class boundary.
Consider the deformation $C: \Sph^{m-1} \times [0,T) \mapsto $, of the embedding $C(\Sph^{m-1},0)=\bd K$, satisfying
$\frac{\partial C}{\partial t} = \frac{1}{\kappa^r} N$, where $N$ is the outer unit normal of $C$, $\kappa$ is its Gaussian curvature, and $r > 0$.
For any $t \in [0,T)$, let $K(t)$ denote the convex body bounded by $C(\Sph^{m-1},t)$. 
Then $n(K(t))$ is an increasing function of $t$.
\end{conj}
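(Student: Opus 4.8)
The plan is to follow the template of the proof of Theorem~\ref{thm:eikonal}, isolating the two ingredients that made it work and supplying the (considerably harder) analogue of each. Those ingredients were: (i) $K$, hence every $K(t)$, contains all its centres of curvature, so that $n(K(t),p)=2$ for $p\in\bd K(t)$; and (ii) under the eikonal flow the family of normal lines is \emph{frozen} --- the normal line at the $t$-offset point is literally the old normal line --- so that $n(K(t),p)$ is independent of $t$ for fixed $p$, the ``excess'' $I(K(t))-2\vol(K(t))$ is constant, and $n(K(t))=2+\frac{(n(K)-2)\vol(K)}{\vol(K(t))}$ is manifestly monotone. For a genuine power $r>0$ the normal lines rotate, so (ii) fails, and that is where essentially all of the difficulty sits.

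\emph{Step 1 (preservation of the curvature condition).} First I would show that $\partial_t C=\kappa^{-r}N$ keeps $K(t)$ a strictly convex $C^2_+$ body containing all of its centres of curvature, on a maximal interval $[0,T)$. For this I would derive the evolution equation of the principal radii of curvature --- equivalently of the inverse Weingarten map, or, for $m=2$, of $\rho=h+h''$ ($h$ the support function, primes with respect to the support angle), which solves the quasilinear parabolic equation $\partial_t\rho=\rho^{r}+(\rho^{r})''$ --- and run a maximum-principle (or tensor-maximum-principle) argument of the type used by Hamilton, Huisken and Andrews for flows by powers of curvature, drawing on the existence, regularity and convexity theory of Andrews, Gerhardt and Urbas. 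The condition to be preserved is that the focal set (evolute/caustic) of $C(\Sph^{m-1},t)$ stays inside $K(t)$: since the flow is expanding while the focal set ought not to expand, this is the natural candidate, and $T$ would be its first failure time, or the first singular time of the flow, whichever is smaller. Exactly as in Theorem~\ref{thm:eikonal}, this gives $n(K(t),p)=2$ for $p\in\bd K(t)$ and $t\in[0,T)$; and then, by Theorem~\ref{thm:constantwidth}, $I(K(t))\le\vol(K(t)-K(t))$ and $n(K(t))\le\binom{2m}{m}$ --- though these only bound, and do not order, the values.

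\emph{Step 2 (differentiating the ratio).} Next I would compute $\frac{\dif}{\dif t}n(K(t))=\frac{1}{\vol(K(t))}\bigl(\frac{\dif}{\dif t}I(K(t))-n(K(t))\,\frac{\dif}{\dif t}\vol(K(t))\bigr)$, with $\frac{\dif}{\dif t}\vol(K(t))=\int_{\bd K(t)}\kappa^{-r}\dif A$. Using the Federer/coarea decomposition from the proof of Theorem~\ref{thm:constantwidth} (equations (\ref{eq:Jacobi})--(\ref{eq:atteres})), $\frac{\dif}{\dif t}I(K(t))$ splits into a boundary-layer term $\int_{\bd K(t)}n(K(t),p)\,\kappa^{-r}\dif A$, which by Step~1 equals $2\int_{\bd K(t)}\kappa^{-r}\dif A=2\,\frac{\dif}{\dif t}\vol(K(t))$ (fresh points are swept in carrying exactly $2$ normals), plus a bulk term $B(t)$ produced by the rotation of the normal lines, i.e.\ by the way the caustic of $K(t)$ sweeps across interior points $p$. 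Hence $\frac{\dif}{\dif t}n(K(t))=\frac{1}{\vol(K(t))}\bigl((2-n(K(t)))\,\frac{\dif}{\dif t}\vol(K(t))+B(t)\bigr)$, and since $n(K(t))\ge 2$ and the volume is increasing, the claim reduces precisely to $B(t)\ge(n(K(t))-2)\,\frac{\dif}{\dif t}\vol(K(t))\ge 0$. (For $r=0$ one has $B\equiv 0$, recovering the monotonicity of Theorem~\ref{thm:eikonal}.)

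\emph{Step 3 (the main obstacle: the bulk term).} The crux --- and where I expect essentially all of the work to be --- is a quantitative lower bound on $B(t)$. The guiding heuristic is that $\kappa^{-r}$ advances the flat parts of $\bd K$ faster than the strongly curved parts, so the flow makes $K(t)$ ``less round'', and a less round body carries on average more normals through its interior points; the task is to turn this into the estimate above. A natural route is to track, for fixed $p$, how the critical points of $q\mapsto|q-p|$ on $\bd K(t)$ are born as $t$ increases: once Step~1 confines $K(t)$ to the relevant class, they can only be created --- never destroyed --- away from the focal set, which gives at least the pointwise monotonicity $n(K(t_2),p)\ge n(K(t_1),p)$ for $t_1<t_2$, $p\in\inter K(t_1)$, hence the nondecrease of the excess $I(K(t))-2\vol(K(t))$. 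This alone is still insufficient, because the volume grows too; what one genuinely needs is that the excess grows at least at the \emph{relative} rate of the volume. I would first run the whole scheme in the plane, where the support-function PDE above and the explicit parametrization of the evolute make both Step~1 and the bound on $B(t)$ tractable, and only then attempt general $m$ via the curvature-flow machinery.
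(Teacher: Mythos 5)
You have not given a proof, and neither does the paper: this statement appears there only as a conjecture, stated immediately after Theorem~\ref{thm:eikonal} as its proposed generalization, and the authors offer no argument for it. So the only fair comparison is between your outline and the genuine obstacles, and on that score your proposal is a research programme with its two load-bearing steps missing. Step 1 (that the flow $\partial_t C=\kappa^{-r}N$ preserves the property of containing all centres of curvature) is asserted, not proved: the whole reason the eikonal case ($r=0$) works in Theorem~\ref{thm:eikonal} is that the focal set is literally frozen under that flow, and for $r>0$ the evolute moves, so ``the focal set ought not to expand'' is exactly what would have to be established, and no maximum-principle computation is actually carried out. Step 3 you concede yourself: the required lower bound on the bulk term $B(t)$ is precisely the content of the conjecture, and the auxiliary claim that critical points of $q\mapsto|q-p|$ ``can only be created, never destroyed'' is unjustified --- when the moving caustic of $K(t)$ sweeps across a fixed interior point $p$, normals through $p$ can be annihilated as well as born, depending on the direction of the sweep, so even the pointwise monotonicity of $n(K(t),p)$ is open.

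One further point worth flagging: your bookkeeping in Step 2 gives, for $r=0$, $\frac{\dif}{\dif t}n(K(t))=\frac{2-n(K(t))}{\vol(K(t))}\frac{\dif}{\dif t}\vol(K(t))\le 0$, i.e.\ a \emph{decreasing} average, which matches what the proof of Theorem~\ref{thm:eikonal} actually derives ($n(K(t_2))<n(K(t_1))$ for $t_2>t_1$) but is opposite to the word ``increasing'' in that theorem's statement and in the conjecture as printed. So before attempting the hard estimate on $B(t)$ you would need to settle which direction of monotonicity is even the right one to aim for; your own heuristic in Step 3 (that the flow makes the body ``less round'') also sits uneasily with the known asymptotic roundness, after rescaling, of expanding flows by negative powers of curvature, and since $n(K)$ is scale-invariant this is not a detail one can wave away.
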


Instead of the average number of normals through a point of a given convex body, we may investigate the average $d(K)$ of the affine diameters
of $K$ (cf. Definition~\ref{defn:diameter}).

\begin{rem}
Hammer \cite{H51} proved that for any $K \in \K_m$ and any $p \in K$ there is an affine diameter of $K$ passing through $p$.
Thus, we have $d(K) \geq 1$. On the other hand, for the Euclidean unit ball $\B$ in $\Re^m$, we have $d(\B) = 1$. This determines the minimum of $d(K)$ over $\K_m$.
\end{rem}

To determine the maximum of $d(K)$ for sufficiently smooth and strictly convex bodies, one may try to follow the idea of Chakerian \cite{C84} and Hann \cite{Hann93}, replacing the unit normal vector $N$ of $\bd K$ in the proof of Theorem~\ref{thm:constantwidth} by the unit tangent vector of the affine diameter starting at the corresponding point of $\bd K$.
\emph{If} the Jacobian $J$ in (\ref{eq:Jacobi}) has only real roots, then we may apply the argument in the proof of Theorem~\ref{thm:constantwidth}
and obtain the estimate $D(K) \leq \frac{1}{2} \vol(K-K)$, generalizing (1.1) of \cite{C84} for any dimensions.
Nevertheless, this quantity is known to have only real roots only in the planar case, or if $N$ is orthogonal to $\bd K$ at each point.
This leads to the following questions.

\begin{ques}
Prove or disprove the existence of a strictly convex body $K \in \K_m$ with $C^2$-class boundary, for which the Jacobian of the mapping
\begin{equation}\label{eq:affinemapping}
r(u_1,\ldots,u_{m-1},\lambda) = r(u_1,\ldots,u_{m-1}) + \lambda N(u_1,\ldots,u_{m-1}),
\end{equation}
where $N$ is an inner unit tangent vector of the affine diameter starting at $r(u_1,\ldots,u_{m-1}) \in \bd K$, has nonreal roots.
\end{ques}

\begin{ques}
Prove or disprove the existence of a strictly convex body $K$ with $C^2$-class boundary, satisfying $D(K) > \frac{1}{2} \vol(K-K)$.
\end{ques}

On the other hand, affine diameters correspond to normals if, and only if the body is of constant width. Thus, our result immediately yields the following
generalization of the result of Chakerian \cite{C84}.

\begin{rem}
Let $K \subset \W_m$ be a convex body of constant width. Then $D(K) \leq \frac{1}{2}\vol(K-K)$.
\end{rem}

Note that a real root of the Jacobian of the mapping in (\ref{eq:affinemapping}) corresponds to a point $p$ on the affine diameter
where, in a certain direction, the family of lines, defined in the mapping, is rotated about $p$.
These points are the higher dimensional analogues of the \emph{instantaneous centre of rotations} used in \cite{C84}.

\end{document}